 \newtheorem{thm}{Theorem}[section]
 \newtheorem{cor}[thm]{Corollary}
 \newtheorem{lem}[thm]{Lemma}
 \newtheorem{prop}[thm]{Proposition}
 \theoremstyle{definition}
 \theoremstyle{remark}
 \newtheorem*{Ack}{Acknowledgment}
 \numberwithin{equation}{section}
\newcommand{\D}{\mathbb{D}}
\newcommand{\Log}{\text{Log}}
\newcommand{\C}{\mathbb{C}}
\newcommand{\W}{W_{\psi, \phi}}
\renewcommand{\phi}{\varphi}
\begin{document}
\large
%
%
%
%
%
%
%
%
%
\title[Spaces supporting nontrivial Hermitian weighted composition operators
]
 {Reproducing kernel Hilbert spaces supporting nontrivial Hermitian weighted composition operators
}
\author[Bourdon]{Paul  Bourdon}

\address{%
Mathematics Department\\
Washington and Lee University\\
Lexington, VA 24450\\
USA}

\email{bourdonp@wlu.edu}

\author[Shang]{Wenling Shang}
\address{Mathematics Department\\
Washington and Lee University\\
Lexington, VA 24450\\
USA}
\email{shangw11@mail.wlu.edu}
\subjclass{Primary 47B32; Secondary 47B33, 47B15}

\keywords{weighted composition operator, composition operator, weighted Hardy space, Hermitian operator. }

\date{March 1, 2011}

\begin{abstract}
  We characterize those  generating functions $k(z) = \sum_{j=0}^\infty z^j/\beta(j)^2$  that produce weighted Hardy spaces  $H^2(\beta)$ of the unit disk  $\D$ supporting nontrivial Hermitian weighted composition operators.   Our characterization shows that the spaces associated with the ``classical reproducing kernels''  $z\mapsto (1-\bar{w}z)^{-\eta}$,  where $w\in \D$ and $ \eta>0$,   as well as certain natural extensions of these spaces, are precisely those that are hospitable to Hermitian weighted composition operators.  It also leads to a refinement of a necessary condition for a weighted composition to be Hermitian, obtained recently by Cowen, Gunatillake, and Ko,  into one that is both necessary and sufficient.
\end{abstract}

\maketitle

\section{Introduction}

     In broad terms, we show in this paper how an operator-theoretic condition determines a family of spaces that are potentially hospitable to operators satisfying the condition.  Every space that is potentially hospitable turns out to be entirely hospitable. 
   
    This paper concerns weighted composition operators acting on weighted Hardy spaces of the open unit disk $\D$ in the complex plane.   A Hilbert space comprising functions holomorphic on $\D$ in which the polynomials are dense and the monomials $1$, $z$, $z^2$, \ldots, constitute an orthogonal set of nonzero vectors is  a {\it weighted Hardy space}.  Each weighted Hardy space is characterized by its {\it weight sequence} $\beta$ defined by  $\beta(j) = \|z^j\|$ for  $j\ge 0$.  The weighted Hardy space $H^2(\beta)$ consists of those functions $f$ holomorphic on $\D$ whose Maclaurin coefficients $(\hat{f}(j))$ satisfy
$$
\sum_{j=0}^\infty |\hat{f}(j)|^2\beta(j)^2 < \infty.
$$
The inner product of $H^2(\beta)$ is given by
$$
\langle f, g\rangle = \sum_{j=0}^\infty \hat{f}(j)\overline{\hat{g}(j)}\beta(j)^2.
$$
If $\beta(j) = 1$ for all $j$, then $H^2(\beta)$ is the classical Hardy space $H^2$ of the disk; the choices $\beta(j) = (j+1)^{-1/2}$ and $\beta(j) = (j+1)^{1/2}$ yield, respectively, the classical Bergman and Dirichlet spaces of the disk.  {\em Throughout this paper we make the normalizing assumption that $\beta(0) = 1$.}

  The {\it generating function} $k$ of the weighted Hardy space $H^2(\beta)$ is defined by
$$
k(z) = \sum_{j=0}^\infty \frac{z^j}{\beta(j)^2}, \quad z\in \D.
$$
Thus, owing to our assumption that $\beta(0) =1$,  we have  $k(0)= 1$ for all generating functions.
It's not difficult to show  that $k$ must be analytic on $\D$ (\cite[Lemma 2.9]{CMB}) and that $k$ is a generating function for a weighted Hardy space $H^2(\beta)$  if and only if $\beta(j)$  is positive for each $j$, $\beta(0) =1$, and $\liminf \beta(j)^{1/j} \ge 1$ (see, e.g., exercise 2.1.10 of \cite{CMB}).   

The generating function $k$ generates reproducing kernels as follows.  For $w\in \D$, the function $K_w$ defined on $\D$ by $K_w(z) = k(\bar{w}z)$ is the reproducing kernel at $w$ for $H^2(\beta)$:

$$
\langle f, K_w\rangle = f(w)\quad  \text{for all}\  f \in H^2(\beta).
$$
  It's easy to see that for any $w\in \D$, the reproducing kernels $K_w^D$ and $K_w^{DD}$ for the first and second derivatives of functions in $H^2(\beta)$ are given, respectively, by 
\begin{equation}\label{RKD}
K^D_w(z) = zk'(\bar{w}z) \quad  \text{and}\quad  K_w^{DD}(z) = z^2k''(\bar{w}z),  \quad z\in \D.
\end{equation}

    Let $H(\D)$ denote the space of all functions holomorphic on $\D$.  Observe that whenever $\psi\in H(\D)$, and $\phi\in H(\D)$ has the property that it's a selfmap of $\D$ (i.e.,  $\phi(\D) \subseteq \D$), then  $W_{\psi,\phi}$ defined by $W_{\psi, \phi} f  = \psi\cdot (f\circ \phi)$ is a linear operator from $H(\D)$ to $H(\D)$.  We call $\psi$  and $\phi$ the {\it symbols} of the {\it weighted composition operator} $W_{\psi, \phi}$.    We consider weighted composition operators that restrict to be bounded on one or more weighted Hardy spaces.   Interest in weighted composition operators is growing, with some recent papers focusing on Hermitianness and normality  (see, e.g., \cite{BN}, \cite{CK}, \cite{CGK}), cyclicity (see, e.g., \cite{YR}), boundedness and compactness (see, e.g., \cite{AFC},   \cite{GalP}, \cite{GGC},  \cite{VM}), as well as invertibility and spectral behavior (see, e.g., \cite{BRJOT}, \cite{GG}, \cite{GGPP}).  Moreover, they arise naturally in  the context of other problems, for instance in that of characterizing adjoints of composition operators (see, e.g., \cite{BSA}, \cite{CG}, \cite{HMR}).   

    This paper is inspired by the following theorem \cite[Theorem 3]{CGK}, due to Cowen, Gunatillake, and Ko, that provides a necessary condition for $W_{\psi, \phi}$ to be Hermitian on $H^2(\beta)$.

\begin{thm}[Cowen,  Gunatillake, Ko]\label{CGK} Let $k$ be the generating function for $H^2(\beta)$.  If $W_{\psi,\phi}$ is a nonzero bounded Hermitian operator on $H^2(\beta)$, then $\psi(0)$  as well as  $\phi'(0)$ is real, and
\begin{equation}\label{HF}
\psi(z) = ck(\overline{a_0} z) \quad \text{while} \quad \phi(z) = a_0 + a_1 \beta(1)^2 z\frac{k'(\overline{a_0}z)}{k(\overline{a_0}z)},
\end{equation}
where $a_0 = \phi(0)$, $a_1 = \phi'(0)$, and $c = \psi(0)$.
\end{thm}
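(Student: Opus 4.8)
The plan is to exploit the action of $\W$ and its adjoint on reproducing kernels. The first observation is the standard identity that for a weighted composition operator $\W$ bounded on $H^2(\beta)$, the adjoint acts on reproducing kernels by
\begin{equation}\label{adjformula}
\W^* K_w = \overline{\psi(w)}\, K_{\phi(w)}, \qquad w\in\D.
\end{equation}
Indeed, $\langle f, \W^* K_w\rangle = \langle \W f, K_w\rangle = \psi(w) f(\phi(w)) = \langle f, \overline{\psi(w)} K_{\phi(w)}\rangle$ for every $f\in H^2(\beta)$. Since $\W$ is Hermitian, $\W = \W^*$, so $\W K_w = \overline{\psi(w)}\, K_{\phi(w)}$ as well. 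Writing out the left side, $\W K_w = \psi\cdot(K_w\circ\phi)$, we get the functional equation
\begin{equation}\label{funceq}
\psi(z)\, k\bigl(\overline{w}\,\phi(z)\bigr) = \overline{\psi(w)}\, k\bigl(\overline{\phi(w)}\,z\bigr), \qquad z,w\in\D.
\end{equation}
This single identity, valid for all $z,w$, is the engine of the whole argument.

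Next I would extract information by specializing. Setting $w=0$ in \eqref{funceq} and using $k(0)=1$ gives $\psi(z) = \overline{\psi(0)}\, k\bigl(\overline{\phi(0)}\, z\bigr) = \bar c\, k(\overline{a_0} z)$ where $c=\psi(0)$, $a_0=\phi(0)$; evaluating this at $z=0$ forces $c = \bar c$, so $c$ is real and $\psi(z) = c\, k(\overline{a_0} z)$, the first formula in \eqref{HF}. (One should note $c\neq 0$: if $\psi(0)=0$ then $\psi\equiv 0$ by the displayed formula, making $\W$ the zero operator, contrary to hypothesis.) For the formula for $\phi$, I would substitute the now-known expression for $\psi$ back into \eqref{funceq} and then differentiate with respect to $z$ at $z=0$. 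The left side is $c\, k(\overline{a_0}z)\, k(\overline{w}\phi(z))$; its $z$-derivative at $0$ is $c\bigl[\overline{a_0}\,k'(0)\,k(\overline{w}a_0) + \overline{w}\,\phi'(0)\,k'(\overline{w}a_0)\bigr]$. The right side is $c\,k(\overline{a_0}\cdot 0)\cdot k\bigl(\overline{\phi(w)}z\bigr)$ — wait, more carefully the right side of \eqref{funceq} is $\overline{\psi(w)} k(\overline{\phi(w)}z) = c\,\overline{k(\overline{a_0}w)}\, k(\overline{\phi(w)}z)$, whose $z$-derivative at $0$ is $c\,\overline{k(\overline{a_0}w)}\,\overline{\phi(w)}\,k'(0)$. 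Using $k'(0) = 1/\beta(1)^2$ and rearranging the resulting identity in $w$, one solves for $\overline{\phi(w)}$; replacing $w$ by its conjugate partner (or taking conjugates and relabeling) yields
\[
\phi(z) = a_0 + a_1\beta(1)^2\, z\,\frac{k'(\overline{a_0}z)}{k(\overline{a_0}z)},
\]
with $a_1 = \phi'(0)$, which is the second formula in \eqref{HF}. The reality of $\phi'(0)$ and of $\psi(0)$ should drop out along the way: $\psi(0)=c$ real was already shown, and comparing the $w$-expansion coefficients (or invoking that $\W$ Hermitian forces $\langle \W z, 1\rangle = \overline{\langle \W 1, z\rangle}$, which relates $\phi'(0)\beta(1)^2$ to its own conjugate) gives $\phi'(0)\in\mathbb{R}$.

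The main obstacle I anticipate is bookkeeping with the conjugates in \eqref{funceq} — the variable $w$ appears conjugated inside $k$ on one side and $\phi(w)$ conjugated on the other, so one must be disciplined about whether a given expression is holomorphic or antiholomorphic in $w$, and the cleanest route may be to treat $w$ and $\bar w$ as independent and match holomorphic parts, or equivalently to differentiate in $z$ first (which keeps things holomorphic in $z$) before doing anything with $w$. A secondary subtlety is justifying that $k(\overline{a_0}z)$ is never zero on $\D$ — needed for the formula for $\phi$ to make sense and for $\phi$ to actually be holomorphic — but this follows because $K_{a_0} = k(\overline{a_0}\,\cdot)$ is a reproducing kernel and reproducing kernels of a weighted Hardy space with dense polynomials are zero-free on $\D$ (a standard fact, or one checks $\langle K_{a_0}, K_{a_0}\rangle = k(|a_0|^2) > 0$ together with the nonvanishing of the generating function). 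Once \eqref{funceq} is in hand, everything else is a short computation.
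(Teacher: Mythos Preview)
Your proposal is correct, and it follows what the paper identifies as the original Cowen--Gunatillake--Ko approach: comparing $\W$ and $\W^*$ on reproducing kernels $K_w$ to obtain the two-variable functional equation, then specializing. The paper itself re-derives the result differently (Proposition~\ref{FSTT}), working instead with the two equations $\W^*1=\W 1$ and $\W^*z=\W z$ and computing $\W^*z$ via $\langle g,\W^*z\rangle=(\psi\cdot g\circ\phi)'(0)\,\beta(1)^2$, which expresses $\W^*z$ as a combination of $K_{a_0}$ and $K_{a_0}^D$.

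These two routes are really dual to one another: your ``set $w=0$'' step \emph{is} the equation $\W^*K_0=\W K_0$, i.e.\ $\W^*1=\W 1$; and your ``differentiate in $z$ at $z=0$ and vary $w$'' step is equivalent, after pairing against $K_w$, to the equation $\W^*z=\W z$. So the computations coincide once unpacked. The paper's packaging has the advantage that it feeds directly into the statement of Theorem~\ref{MT}(ii), where the three equations $\W^*z^j=\W z^j$ ($j=0,1,2$) are the organizing principle and the $j=2$ case yields the differential equation for $k$. Your packaging has the advantage that the single identity \eqref{funceq} is memorable and makes the later Hermitian verification (checking $\W K_w=\W^*K_w$ once $k$ is known) feel like closing a loop. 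One small point: the nonvanishing of $k(\overline{a_0}z)$ on $\D$ does not need a separate argument here, since the equation $\psi\phi=(\text{analytic})$ with $\psi=c\,k(\overline{a_0}\cdot)$ already forces the quotient to extend analytically; the paper simply writes the formula without comment.
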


In \cite{CGK}, Cowen, Gunatillake, and Ko, show that the converse of the preceding theorem holds when $k(z) = (1-z)^{-\eta}$ and $\eta \ge 1$, the corresponding spaces being the standard-weighted Bergman spaces when $\eta > 1$ and the Hardy space $H^2$ when $\eta = 1$.  In this paper, we determine precisely when the converse holds.   There are three cases when the converse holds trivially.  
\begin{itemize}
\item $\psi$ is the zero function:  In this case, both $\W$ and $\W^*$ are the zero operator.
\item $\phi(0) = 0$:  Suppose that $a_0 = 0$, i.e., $\phi(0) = 0$,  and that  $\psi$ and $\phi$ are given by $(\ref{HF})$; then for all $z\in \D$, we have $\psi(z) =c$ (a real constant) and $\phi(z) = a_1z$ (where $a_1$ is real and $|a_1| \le 1$).   In this situation $\W$ is easily seen to be bounded (with norm $\le  |c|$) as well as  Hermitian on any weighted Hardy space $H^2(\beta)$. 
\item $\phi'(0) = 0$:   Suppose that $a_0$ is a point in $\D$,  that $a_1 = 0$ (i.e $\phi'(0)= 0$),  and that $\psi$ and $\phi$ are given by $(\ref{HF})$.   Then $\psi(z) = c k(\overline{a_0}z)$ and $\phi(z) = a_0$ for all $z\in \D$ and $c$ is real.   Notice that for any $f\in H^2(\beta)$, $\W f=  f(a_0) \psi = c\langle f, K_{a_0}\rangle K_{a_0}$,  so that the $\W$ is bounded on $H^2(\beta)$ with operator norm $\|\W\| \le |c|\|K_{a_0}\|^2$.  Moreover,    for any $w\in \D$,  we have  
$$
(\W K_w)(z) = ck(\overline{a_0}z)k(\bar{w}a_0)  \ \forall z\in \D
$$
and
$$
(\W^*K_w)(z)  = \overline{\psi(w)}K_{\phi(w)}(z) = ck(a_0\bar{w}) k(\overline{a_0}z)    \ \forall z\in \D.
$$
Thus, $\W^*$ and $\W$ agree when applied to each reproducing kernel and it follows that they agree when applied to each $f\in H^2(\beta)$.  
\end{itemize}
We are interested in determining  which weighted Hardy spaces support {\it nontrivial Hermitian weighted composition operators} $\W$, that is, those for which $\psi$ is not the zero function while both $\phi(0) \ne 0$ and $\phi'(0) \ne 0$.

 We completely characterize those weighted Hardy spaces supporting nontrivial Hermitian weighted composition operators.  We do this by deriving and solving a differential equation that the generating function $k$  for $H^2(\beta)$ must satisfy if the weighted composition operator $\W: H^2(\beta) \rightarrow H^2(\beta)$, whose symbols are defined by  (\ref{HF}),  is nontrivially Hermitian.   The result of our work is summarized in the following theorem.

\begin{thm}\label{MT}  Let $\phi\in H(\D)$ be a selfmap of $\D$ such that $\phi(0) \ne 0$ and $\phi'(0)\ne 0$. Let $\psi$ be an analytic function on $\D$ that is not the zero function.  The following are equivalent.
\begin{itemize}
\item[(i)]  $\W$ is a bounded Hermitian operator on $H^2(\beta)$.  
\item[(ii)]  $\W$ is a bounded operator on $H^2(\beta)$ satisfying 
$$
\W^* 1 = \W 1, \W^*z = \W z,\  \text{and}\  \W^*z^2 = \W z^2.
$$
\item[(iii)]  The generating function $k(z) = \sum_{j=0}^\infty z^j/\beta(j)^2$ for $H^2(\beta)$ takes one of the following forms
\begin{equation}\label{ocone}
k(z)=e^{\frac{z}{\beta(1)^2}}\quad \text{in case} \quad  \frac{2\beta(1)^4}{\beta(2)^2}=1,
\end{equation}
and otherwise,
\begin{equation}\label{ocnotone}
k(z)= (1-\lambda z)^{-\frac{1}{\lambda\beta(1)^2}},
\end{equation}
where $\lambda =  \frac{1}{\beta(1)^2}\left(\frac{2\beta(1)^4}{\beta(2)^2}-1\right)$
is a positive number less than or equal to $1$.  In addition,  the functions $\psi$ and $\phi$ satisfy
$$
\psi(z) = ck(\overline{a_0} z) \quad \text{while} \quad \phi(z) = a_0 + a_1 \beta(1)^2 z\frac{k'(\overline{a_0}z)}{k(\overline{a_0}z)},
$$
 where $a_0 = \phi(0)$, $a_1 = \phi'(0)$ is real, and $c = \psi(0)$ is real.
 \end{itemize}
\end{thm}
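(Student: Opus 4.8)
The plan is to prove the cycle $(i)\Rightarrow(ii)\Rightarrow(iii)\Rightarrow(i)$. The implication $(i)\Rightarrow(ii)$ is immediate, since a Hermitian operator is bounded and equals its own adjoint.

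For $(ii)\Rightarrow(iii)$, the first task is to translate the three scalar identities of $(ii)$ into functional equations for $k$. Writing $1=K_0$, $z=\beta(1)^2K_0^D$ and $z^2=\frac{\beta(2)^2}{2}K_0^{DD}$ via $(\ref{RKD})$ at $0$, and expanding $(\W f)'$ and $(\W f)''$ by the product rule together with the reproducing relations for $K^D$ and $K^{DD}$, one obtains for any bounded $\W$
\begin{align*}
\W^*K_w &= \overline{\psi(w)}\,K_{\phi(w)},\\
\W^*K_w^D &= \overline{\psi'(w)}\,K_{\phi(w)}+\overline{\psi(w)\phi'(w)}\,K_{\phi(w)}^D,\\
\W^*K_w^{DD} &= \overline{\psi''(w)}\,K_{\phi(w)}+\overline{2\psi'(w)\phi'(w)+\psi(w)\phi''(w)}\,K_{\phi(w)}^D+\overline{\psi(w)\phi'(w)^2}\,K_{\phi(w)}^{DD}.
\end{align*}
Evaluating at $w=0$: the identity $\W^*1=\W1=\psi$ forces $\psi(z)=ck(\overline{a_0}z)$ with $c=\psi(0)$ real, and substituting this into $\W^*z=\W z=\psi\cdot\phi$ and cancelling the factor $ck(\overline{a_0}z)$ (nonvanishing near $0$, hence harmless by analyticity) forces $\phi(z)=a_0+a_1\beta(1)^2z\,k'(\overline{a_0}z)/k(\overline{a_0}z)$ with $a_1=\phi'(0)$ real. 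Thus $(ii)$ already yields $(\ref{HF})$ — a re-derivation of Theorem~\ref{CGK} under weaker hypotheses.

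The crux is the remaining identity $\W^*z^2=\W z^2=\psi\cdot\phi^2$. Inserting the forms of $\psi,\phi$ just found, together with the values of $\psi'(0),\psi''(0),\phi''(0)$ (which depend only on $\beta(1),\beta(2)$), and cancelling the $k$- and $zk'$-terms that appear on both sides, I expect everything to collapse to the single equation
$$
\beta(1)^4\bigl(k'(\zeta)\bigr)^2=\frac{\beta(2)^2}{2}\,k(\zeta)k''(\zeta),\qquad\zeta\in\D.
$$
With $p=(\log k)'=k'/k$ this becomes the separable equation $p'=\sigma p^2$, $p(0)=1/\beta(1)^2$, where $\sigma=\frac{2\beta(1)^4}{\beta(2)^2}-1=\lambda\beta(1)^2$. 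If $\sigma=0$ then $p\equiv1/\beta(1)^2$ and $k(z)=e^{z/\beta(1)^2}$, which is case $(\ref{ocone})$; if $\sigma\neq0$ then $p(z)=(\beta(1)^2-\sigma z)^{-1}$ and integrating gives $k(z)=(1-\lambda z)^{-1/(\lambda\beta(1)^2)}$, which is case $(\ref{ocnotone})$. Finally, for $k$ of this last shape to be a legitimate generating function one must have $0<\lambda\le1$: $\lambda>1$ would place a singularity of $k$ inside $\D$, and $\lambda<0$ would make the Maclaurin coefficients $1/\beta(j)^2$ of $(1+|\lambda|z)^{1/(|\lambda|\beta(1)^2)}$ eventually non-positive, contradicting positivity of $\beta$.

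For $(iii)\Rightarrow(i)$ I would first check that $\W$ and $\W^*$ agree on the dense span of reproducing kernels. Since $\langle\W K_w,K_u\rangle=ck(\overline{a_0}u)k(\bar w\phi(u))$ and $k$ has real Maclaurin coefficients, this amounts to the identity $k(\overline{a_0}u)k(\bar w\phi(u))=k(a_0\bar w)k(u\,\overline{\phi(w)})$, which is verified by direct substitution of the explicit $k$ and of the corresponding $\phi$ — a linear-fractional self-map in case $(\ref{ocnotone})$, the affine map $z\mapsto a_0+a_1z$ in case $(\ref{ocone})$. It then remains to show $\W$ restricts to a bounded operator on $H^2(\beta)$; writing $\W=M_\psi C_\phi$, the weight $\psi=cK_{a_0}$ is a bounded multiplier (it extends holomorphically across $\partial\D$ in case $(\ref{ocnotone})$, and has rapidly decaying coefficients in case $(\ref{ocone})$) while $C_\phi$ is bounded because $\phi$ is a linear-fractional, respectively affine, self-map of $\D$. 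Boundedness together with agreement on a dense set gives $\W=\W^*$, completing the cycle. The step I expect to be the main obstacle is the reduction of $\W^*z^2=\W z^2$ to the displayed differential equation, which requires careful tracking of the second-order data and the realization that the equation collapses to a solvable one, with $p=(\log k)'$ the decisive linearizing substitution; a secondary difficulty is the boundedness assertion in $(iii)\Rightarrow(i)$ for the ``Dirichlet-type'' power spaces with $1/(\lambda\beta(1)^2)<1$ and for the exponential space, which lie outside the range treated by Cowen, Gunatillake, and Ko.
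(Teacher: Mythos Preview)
Your argument for $(ii)\Rightarrow(iii)$ is essentially the paper's, with one cosmetic difference: you solve the ODE via the substitution $p=(\log k)'$, reducing to $p'=\sigma p^2$, whereas the paper writes $k''/k'=\gamma\,k'/k$ and integrates in two steps. Both routes are correct and yield the same dichotomy. (A small omission: the equation is initially derived only on $|a_0|\D$, and one must invoke analyticity of both sides on $\D$ to extend; the paper does this explicitly.)

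The genuine gap is in $(iii)\Rightarrow(i)$, specifically in your boundedness argument. You propose to factor $\W=M_\psi C_\phi$ with $M_\psi$ a bounded multiplier and $C_\phi$ a bounded composition operator. This fails outright in the exponential case~(\ref{ocone}): the corresponding $H^2(\beta)$ is a Fock space, which supports \emph{no} nonconstant bounded multiplication operators, so $M_\psi$ is unbounded even though $\psi(z)=ce^{\overline{a_0}z/\beta(1)^2}$ has ``rapidly decaying coefficients.'' The paper instead proves boundedness of $\W$ directly by a change-of-variable estimate in the Fock integral norm, exploiting that the Gaussian weight absorbs the exponential growth of $\psi\circ\phi^{-1}$ because $|a_1|<1$.

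For case~(\ref{ocnotone}) your approach is closer to working but still incomplete. When $\eta=1/(\lambda\beta(1)^2)\ge 1$ and $\lambda=1$, the spaces are Hardy or standard-weight Bergman and your $M_\psi C_\phi$ factorization is valid. But for $0<\eta<1$ (Dirichlet-type) it is not automatic that bounded analytic functions multiply $H^2(\beta_\eta)$ boundedly, nor that arbitrary self-maps give bounded $C_\phi$; the paper circumvents this by showing $f\in H^2(\beta_\eta)\iff f'\in H^2(\beta_{\eta+2})$ and pushing the question to a weighted Bergman space. Finally, for $0<\lambda<1$ the paper needs a separate step: it shows that if $\phi$ is a self-map of $\D$ then it is also a self-map of $\tfrac{1}{\sqrt\lambda}\D$, which permits a unitary dilation reducing to the $\lambda=1$ case. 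You flag these cases as a ``secondary difficulty'' but do not indicate how to handle them; the $M_\psi C_\phi$ strategy you sketch is not the right tool here.
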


That (i) implies (ii) is trivial.   In the next section of this paper,  we show that (ii) implies (iii), adding to the necessary condition (\ref{HF}) for $\W$ to be Hermitian, the restrictions on $k$ described by (\ref{ocone}) and (\ref{ocnotone}).  In the final section, we prove that (iii) implies (i).  To show that if (iii) holds, then $\W$ is bounded on $H^2(\beta)$, we work with integral representations of norms of $H^2(\beta)$.  In particular, we exploit the fact that if its generating function $k$ has the form (\ref{ocone}), then all functions in $H^2(\beta)$ extend to be entire functions,  and $H^2(\beta)$ may be identified with the Fock space
\begin{equation}\label{FSF}
F^2_{1/\beta(1)^2}: = \{ f\in H(\C): \|f\|^2_{F} : = \frac{1}{\pi \beta(1)^2} \int_\C |f(z)|^2 e^\frac{-|z|^2}{
\beta(1)^2}\, dA(z) < \infty\},
\end{equation}
whose norm coincides with that of $H^2(\beta)$ (see, e.g., \cite{TG}).   
This Fock space may be viewed as the limit of the weighted Hardy spaces corresponding to the generating functions of $(\ref{ocnotone})$ as $\lambda$ approaches $0$.

\section{An additional necessary condition for $W_{\psi, \phi}$ to be Hermitian}

  Cowen, Gunatillake, and Ko's proof establishing that if $\W$ is a nonzero Hermitian operator on $H^2(\beta)$, then $\psi$ and $\phi$ have the formulas given by $(\ref{HF})$ is based on a comparison of $\W$ and $\W^*$ acting on reproducing kernels $K_w$.  One can also obtain this result from the equations  
$
\W^*1 = \W 1 \quad \text{and}\quad   \W^* z = \W z,
$
as the following proposition shows. 
\begin{prop}\label{FSTT}  Let $H^2(\beta)$ have generating function $k$.   Suppose that $\W$ is a nonzero bounded operator on $H^2(\beta)$ satisfying
\begin{equation}\label{FTHC}
\W^*1 = \W 1 \quad \text{and}\quad   \W^* z = \W z;
\end{equation}
then
$$
\psi(z) = ck(\overline{a_0} z) \quad \text{while} \quad \phi(z) = a_0 + a_1 \beta(1)^2 z\frac{k'(\overline{a_0}z)}{k(\overline{a_0}z)},
$$
 where $a_0 = \phi(0)$, $a_1 = \phi'(0)$ is real, and $c = \psi(0)$ is real.
\end{prop}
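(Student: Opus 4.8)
The plan is to extract everything from the two scalar equations in $(\ref{FTHC})$ by recognizing that $1$ and $z$ are, up to normalization, reproducing kernels: since $\beta(0)=1$ we have $1 = K_0$, and since $k'(0)=1/\beta(1)^2$, formula $(\ref{RKD})$ gives $z = \beta(1)^2 K_0^D$. Both $\W$ and $\W^*$ can be evaluated explicitly on $K_0$ and $K_0^D$ in terms of the symbols and of $k$, so $(\ref{FTHC})$ turns into two functional identities that pin down $\psi$ and $\phi$.

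First I would use $\W^* 1 = \W 1$. Directly, $\W 1 = \psi$, while for every $f\in H^2(\beta)$,
$$
\langle \W^* 1, f\rangle = \overline{(\W f)(0)} = \overline{\psi(0)}\,\overline{f(a_0)} = \langle\,\overline{\psi(0)}\,K_{a_0}, f\,\rangle, \qquad a_0 := \phi(0),
$$
so $(\W^* 1)(z) = \overline{\psi(0)}\,k(\overline{a_0}z)$. Comparing this with $\psi$ and setting $z=0$ forces $\psi(0)=\overline{\psi(0)}$, i.e.\ $c:=\psi(0)$ is real, and then $\psi(z)=c\,k(\overline{a_0}z)$. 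Since $\W\neq 0$, $\psi$ is not identically $0$, so $c\neq 0$.

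Next I would use $\W^* z = \W z$. Here $\W z = \psi\phi$. For the adjoint side, writing $z=\beta(1)^2 K_0^D$ and using the product rule for $(\W f)'(0)$,
$$
\langle \W^* z, f\rangle = \beta(1)^2\,\overline{(\W f)'(0)} = \beta(1)^2\,\overline{\psi'(0)f(a_0) + \psi(0)\phi'(0)f'(a_0)},
$$
hence, with $f(a_0)=\langle f,K_{a_0}\rangle$, $f'(a_0)=\langle f,K_{a_0}^D\rangle$, $a_1:=\phi'(0)$, and $\overline{\psi(0)}=c$,
$$
\W^* z = \beta(1)^2\overline{\psi'(0)}\,K_{a_0} + \beta(1)^2\,c\,\overline{a_1}\,K_{a_0}^D.
$$
Spelling out $K_{a_0}$ and $K_{a_0}^D$ via $(\ref{RKD})$ and inserting $\psi(z)=c\,k(\overline{a_0}z)$, the equation $\W^* z=\W z$ reads
$$
c\,k(\overline{a_0}z)\,\phi(z) = \beta(1)^2\overline{\psi'(0)}\,k(\overline{a_0}z) + \beta(1)^2\,c\,\overline{a_1}\,z\,k'(\overline{a_0}z).
$$
Evaluating at $z=0$ gives $\beta(1)^2\overline{\psi'(0)}=c\,a_0$; substituting back and dividing through by $c\,k(\overline{a_0}z)$, legitimate on the neighborhood of $0$ where $k(\overline{a_0}z)\neq 0$, yields
$$
\phi(z) = a_0 + \beta(1)^2\,\overline{a_1}\,z\,\frac{k'(\overline{a_0}z)}{k(\overline{a_0}z)}.
$$
Differentiating and evaluating at $z=0$, where $z\,k'(\overline{a_0}z)/k(\overline{a_0}z)$ has value $0$ and derivative $k'(0)/k(0)=1/\beta(1)^2$, gives $a_1=\phi'(0)=\overline{a_1}$; so $a_1$ is real and the last display is exactly the asserted formula for $\phi$.

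I do not expect a real obstacle: this is a short, direct computation. The points needing a little care are the correct evaluation of $\W^* z$ (the product rule is what introduces the $K_{a_0}^D$ term), and the observation that dividing by $k(\overline{a_0}z)$ is harmless near the origin — the identity for $\phi$ then holding throughout $\D$ because the pre-division identity, being between two functions holomorphic on all of $\D$, holds everywhere once it holds near $0$.
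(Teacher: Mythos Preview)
Your proof is correct and follows essentially the same approach as the paper's: both arguments identify $1=K_0$ to read off $\psi$, then compute $\W^*z$ via the product rule for $(\psi\cdot f\circ\phi)'(0)$ to obtain the linear combination $\beta(1)^2\overline{\psi'(0)}K_{a_0}+\beta(1)^2 c\,\overline{a_1}K_{a_0}^D$, equate it with $\psi\phi$, and differentiate at $0$ to force $a_1\in\mathbb{R}$. The only cosmetic differences are that the paper pairs in the order $\langle g,\W^*z\rangle$ rather than $\langle \W^*z,f\rangle$, and it evaluates $\overline{\psi'(0)}$ directly from $\psi(z)=ck(\overline{a_0}z)$ rather than by setting $z=0$ in the functional identity.
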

\begin{proof}  Observe that the constant function $z\mapsto 1$ is the reproducing kernel $K_0$ for $H^2(\beta)$.  Thus, our assumption that
$$
\W^* 1 = \W 1
$$
immediately yields  $\overline{\psi(0)} K_{\phi(0)} = \psi$.  Hence for each $z\in \D$, we have 
$$
\psi(z) = \overline{\psi(0)} k(\overline{a_0} z).
$$
Substituting $z = 0$ in the preceding equation (and remembering our standing assumption that $k(0) = 1$ for all generating functions), we get $\psi(0)= \overline{\psi(0)}$. Hence,  $\psi(0)$ is a real number $c$ and
$\psi(z) = c k(\overline{a_0}z)$, as desired.   Observe that $c$ must be nonzero for otherwise, $\W$ is the zero operator.  

  For each $g\in H^2(\beta)$, we have
  \begin{align*}
  \langle g, \W^* z\rangle & = \langle \psi\cdot g\circ \phi, z\rangle\\
  & =  (\psi\cdot g\circ \phi)'(0) \beta(1)^2\\
  & =  \langle g, \beta(1)^2\overline{\psi'(0)}K_{a_0} + \beta(1)^2\overline{\psi(0)}\overline{\phi'(0)}K_{a_0}^D\rangle.
  \end{align*}
Thus the equation $\W^*z = \W z$ yields
$$
\beta(1)^2\overline{\psi'(0)}K_{a_0} + \beta(1)^2\overline{\psi(0)}\overline{\phi'(0)}K_{a_0}^D = \psi \phi.
$$
Substituting $\psi(z) = ck(\overline{a_0}z)$, $\psi(0) = c$ (a nonzero real number), $\psi'(0) = c\overline{a_0}/\beta(1)^2$,  $K_{a_0}(z) = k(\overline{a_0} z)$, and $K_{a_0}^D(z) = z k'(\overline{a_0}z)$ into the preceding equation and solving for $\phi$, we obtain
$$
\phi(z) = a_0 + \overline{\phi'(0)}\beta(1)^2 z\frac{k'(\overline{a_0}z)}{k(\overline{a_0}z)}.
$$
Taking the derivative of both sides of this equation and letting $z = 0$ yields $\phi'(0) = \overline{\phi'(0)}$ making $\phi'(0)$ a real number $a_1$.  Thus $\phi$ has the desired form, completing the proof.
 \end{proof}

Proposition~\ref{FSTT} shows that the   equations $ W^*1 = \W 1$  and   $\W^* z = \W z$  of  Theorem~\ref{MT}, part (ii),  yield the restrictions on $\psi$ and $\phi$ expressed by the formulas (\ref{HF}).  We now show that the final equation of Theorem~\ref{MT}, part (ii), namely  $\W^* z^2 = \W z^2$, yields a restriction on $k$ expressed by a differential equation.

\begin{prop}\label{SSTT}  Let $k$ be the generating function for the weighted Hardy space $H^2(\beta)$.  Suppose that
$$
\phi(z) =  a_0 + a_1 \beta(1)^2 z\frac{k'(\overline{a_0}z)}{k(\overline{a_0}z)}
$$ 
is a selfmap of $\D$ and that
$$
\psi(z) =  ck(\overline{a_0} z)
$$
is a companion weight function such that 
$\W$ is a bounded operator on $H^2(\beta)$ satisfying
$$
\W^* z^2 = \W z^2
$$
 where we assume that $c$ and $a_1$ are real and nonzero.  Then for each $z\in \D$,
\begin{equation}\label{ODE1}
\beta(1)^4\frac{k^{\prime}(\overline{a_0}z)^2}{k(\overline{a_0}z)}=\frac{\beta(2)^2}{2}k^{\prime\prime}(\overline{a_0}z).
\end{equation}
\end{prop}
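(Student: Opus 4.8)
The plan is to imitate the mechanism of Proposition~\ref{FSTT}: express $\W^* z^2$ as an explicit combination of the reproducing kernels $K_{a_0}$, $K^D_{a_0}$, $K^{DD}_{a_0}$, then equate the result with $\W z^2 = \psi\cdot\phi^2$ and simplify using the known forms of $\psi$ and $\phi$ from (\ref{HF}). Since $z^2\in H^2(\beta)$ and the functionals $g\mapsto g(a_0),\,g'(a_0),\,g''(a_0)$ are bounded on $H^2(\beta)$, this is legitimate.

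First I would compute $\W^* z^2$. For $g\in H^2(\beta)$, since $\langle h,z^2\rangle=\widehat h(2)\,\beta(2)^2=\tfrac12 h''(0)\,\beta(2)^2$, we have
$$
\langle g,\W^* z^2\rangle=\langle \psi\cdot(g\circ\phi),z^2\rangle=\frac{\beta(2)^2}{2}\,(\psi\cdot g\circ\phi)''(0).
$$
Expanding $(\psi\cdot g\circ\phi)''(0)$ by the Leibniz and chain rules and using $\phi(0)=a_0$ produces a linear combination of $g(a_0)=\langle g,K_{a_0}\rangle$, $g'(a_0)=\langle g,K^D_{a_0}\rangle$, and $g''(a_0)=\langle g,K^{DD}_{a_0}\rangle$ (the kernels being given by (\ref{RKD})); conjugating the scalar coefficients to move them into the second slot of the inner product and using that $g$ is arbitrary then gives
$$
\W^* z^2=\frac{\beta(2)^2}{2}\Bigl[\,\overline{\psi''(0)}\,K_{a_0}+\overline{2a_1\psi'(0)+\psi(0)\phi''(0)}\;K^D_{a_0}+\overline{a_1^2\psi(0)}\;K^{DD}_{a_0}\,\Bigr].
$$

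Next I would substitute the explicit formulas for $\psi$ and $\phi$, along with the normalizations $k(0)=1$, $k'(0)=\beta(1)^{-2}$, $k''(0)=2\beta(2)^{-2}$, to evaluate $\psi'(0)$, $\psi''(0)$, and $\phi''(0)$ (the last obtained by differentiating $z\,k'(\overline{a_0}z)/k(\overline{a_0}z)$ twice at the origin). On the other side, expanding $\psi\cdot\phi^2=ck(\overline{a_0}z)\bigl(a_0+a_1\beta(1)^2 z\,k'(\overline{a_0}z)/k(\overline{a_0}z)\bigr)^2$ yields $ca_0^2 K_{a_0}(z)+2ca_0a_1\beta(1)^2 K^D_{a_0}(z)+ca_1^2\beta(1)^4 z^2\,k'(\overline{a_0}z)^2/k(\overline{a_0}z)$. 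One then checks (using that $c$ and $a_1$ are real) that, after clearing $\beta(2)^2/2$, the coefficients of $K_{a_0}$ and of $K^D_{a_0}$ coming from $\W^* z^2$ equal exactly $ca_0^2$ and $2ca_0a_1\beta(1)^2$ respectively, so those terms cancel identically. From $\W^* z^2=\W z^2$ the remaining pieces must then agree, i.e.
$$
ca_1^2\beta(1)^4\, z^2\frac{k'(\overline{a_0}z)^2}{k(\overline{a_0}z)}=\frac{\beta(2)^2}{2}\,ca_1^2\, z^2\,k''(\overline{a_0}z),\qquad z\in\D;
$$
cancelling the nonzero constant $ca_1^2$ and the common factor $z^2$ gives (\ref{ODE1}). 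The conceptual content is modest; the main obstacle is purely computational bookkeeping — forming $\phi''(0)$ correctly from the quotient $z\,k'(\overline{a_0}z)/k(\overline{a_0}z)$ and tracking the complex conjugates carefully enough that the $K_{a_0}$ and $K^D_{a_0}$ contributions cancel, leaving precisely the asserted differential equation.
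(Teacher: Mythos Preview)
Your proposal is correct and follows essentially the same route as the paper's proof: both compute $\W^* z^2$ via $\langle g,\W^* z^2\rangle=\tfrac{\beta(2)^2}{2}(\psi\cdot g\circ\phi)''(0)$ to express it as a combination of $K_{a_0}$, $K^D_{a_0}$, $K^{DD}_{a_0}$, substitute the derivatives of $\psi$ and $\phi$ at $0$, and equate with the expansion of $\psi\phi^2$. The paper records the explicit value $\phi''(0)=\dfrac{4a_1\overline{a_0}\beta(1)^2}{\beta(2)^2}-\dfrac{2a_1\overline{a_0}}{\beta(1)^2}$ and then simplifies the full equality, whereas you organize the cancellation term by term, but the computation is the same.
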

\begin{proof}  We assume that $\psi$ and $\phi$ are defined as in the statement of the theorem.    We derive the differential equation (\ref{ODE1}) from the equality
$$
W_{\psi, \phi}^* z^2 = \W z^2.
$$
 The right-hand side of the preceding equation is trivially evaluated to be $\psi \phi^2$; evaluating the left-hand side takes more effort.

Using the definition of the $H^2(\beta)$ inner product, we have for each $g\in H^2(\beta)$,
\begin{align*} 
\langle g, W^*_{\psi,\phi}z^2\rangle &= \langle \W g, z^2\rangle \\
                                                & = \frac{(\psi\cdot g\circ \phi)''(0)}{2} \beta(2)^2\\
                                                &=\frac{\beta(2)^2}{2} \left\langle g,c_1 K_{a_0} + c_2K_{a_0}^D + c_3K_{a_0}^{DD}\right\rangle,
                                                \end{align*}
                                                where $c_1 =  \overline{\psi''(0)}$, $c_2 = \left(\overline{\psi(0)}\overline{\phi''(0)} + 2\overline{\psi'(0)}\overline{\phi'(0)}\right)$, and $c_3 = \overline{\psi(0)} \overline{\phi'(0)^2}$.
  Thus,  
  $$
  \W^*z^2 =    \frac{\beta(2)^2}{2} \left(  \overline{\psi''(0)} K_{a_0} + \left(\overline{\psi(0)}\overline{\phi''(0)} + 2\overline{\psi'(0)}\overline{\phi'(0)}\right)K_{a_0}^D + \overline{\psi(0)} \overline{\phi'(0)^2}K_{a_0}^{DD}\right).
  $$
Substituting into the preceding equation, $\psi(0) = c$, $ \psi'(0) = \dfrac{c\overline{a_0}}{\beta(1)^2}$, $\psi''(0) = \dfrac{2c\overline{a_0}^2}{\beta(2)^2}$, $\phi'(0) = a_1$, and $\phi''(0) = \dfrac{4a_1\overline{a_0}\beta(1)^2}{\beta(2)^2} - \dfrac{2a_1 \overline{a_0}}{\beta(1)^2}$, as well the formulas for $K_{a_0}$, $K^D_{a_0}$, and $K^{DD}_{a_0}$  in terms of the generating function $k$, we obtain
  $$
    W^*_{\psi,\phi}z^2 =  ca_0^2k(\overline{a_0}z)   + 2c a_0 a_1 \beta(1)^2 zk'(\overline{a_0}z) + \frac{ca_1^2\beta(2)^2}{2}z^2k''(\overline{a_0}z).                        
$$
Now equate  $\W z^2$ and $\W^*z^2$, and use the formulas for $\psi$ and $\phi$ to obtain
\begin{equation*}
\begin{split}
 ck(\overline{a_0} z)\left(  a_0 + a_1 \beta(1)^2 z\frac{k'(\overline{a_0}z)}{k(\overline{a_0}z)}\right)^2  =  ca_0^2k(\overline{a_0}z)   + 2c a_0 a_1 \beta(1)^2 zk'(\overline{a_0}z) \\+ \frac{ca_1^2\beta(2)^2}{2}z^2k''(\overline{a_0}z),
 \end{split}
\end{equation*}
 which, because $a_1$ and $c$ are nonzero, simplifies to the desired differential equation that $k$ must satisfy at each point $z\in \D$: 
 $$
\beta(1)^4\frac{k^{\prime}(\overline{a_0}z)^2}{k(\overline{a_0}z)}=\frac{\beta(2)^2}{2}k^{\prime\prime}(\overline{a_0}z).
 $$
 
\end{proof}

We now solve the differential equation  (\ref{ODE1}) to find the form of generating functions for $H^2(\beta)$ spaces that are potentially hospitable to nontrivial Hermitian weighed composition operators.

\begin{thm}\label{NCMT} Let $k(z) = \sum_{j=0}^\infty z^j/\beta(j)^2$ be the generating function for $H^2(\beta)$, where $\beta(0) =1$.  Suppose that  $\W :H^2(\beta)\rightarrow H^2(\beta)$ is a nonzero bounded operator on $H^2(\beta)$, that $\phi(0)$ and $\phi'(0)$ are both nonzero, and that
$$
\W^* 1 = \W 1, \W^*z = \W z,\  \text{and}\  \W^*z^2 = \W z^2.
$$
Then $k$ must assume one of following two forms:
\begin{equation}\label{cone}
k(z)=e^{\frac{z}{\beta(1)^2}}\quad \text{in case} \quad  \frac{2\beta(1)^4}{\beta(2)^2}=1,
\end{equation}
and otherwise,
\begin{equation}\label{cnotone}
k(z)= (1-\lambda z)^{-\frac{1}{\lambda\beta(1)^2}},
\end{equation}
where $\lambda =  \frac{1}{\beta(1)^2}\left(\frac{2\beta(1)^4}{\beta(2)^2}-1\right)$
is a positive number less than or equal to $1$.  
\end{thm}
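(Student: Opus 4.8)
The plan is to feed the hypotheses into the two propositions already established and then integrate the resulting differential equation explicitly. First, since $\W$ is nonzero and satisfies $\W^*1=\W 1$ and $\W^* z=\W z$, Proposition~\ref{FSTT} gives the formulas (\ref{HF}) for $\psi$ and $\phi$ with $c=\psi(0)$ and $a_1=\phi'(0)$ real; moreover $c\ne 0$ (else $\W$ is the zero operator) and, by hypothesis, $a_0=\phi(0)\ne 0$ and $a_1\ne 0$. Thus Proposition~\ref{SSTT} applies, and the equation $\W^* z^2=\W z^2$ forces the identity (\ref{ODE1}) to hold for every $z\in\D$. Because $a_0\ne 0$, the map $z\mapsto\overline{a_0}z$ carries $\D$ onto a disk about $0$ of positive radius, so by the identity theorem
$$
\beta(1)^4\,\frac{k'(w)^2}{k(w)}=\frac{\beta(2)^2}{2}\,k''(w)
$$
holds for all $w$ in a neighborhood of $0$, which makes sense because $k(0)=1$, so $k$ is nonvanishing near $0$.

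Next I would solve this ODE. Writing $\alpha:=2\beta(1)^4/\beta(2)^2$, the equation reads $k\,k''=\alpha\,(k')^2$. Rather than a reduction-of-order substitution, it is cleanest to introduce $v:=(\log k)'=k'/k$, analytic near $0$; a short computation turns the equation into $v'=(\alpha-1)v^2$ with initial value $v(0)=k'(0)/k(0)=1/\beta(1)^2$. If $\alpha=1$, then $v$ is the constant $1/\beta(1)^2$, so $(\log k)(z)=z/\beta(1)^2$ and $k(z)=e^{z/\beta(1)^2}$; and $\alpha=1$ is exactly the case $2\beta(1)^4/\beta(2)^2=1$, giving (\ref{cone}). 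If $\alpha\ne 1$, then $v(0)\ne 0$, so $1/v$ is analytic near $0$ with $(1/v)'=1-\alpha$; integrating, and then integrating once more using $\log k(0)=0$, yields $k(z)=(1-\lambda z)^{-1/(\lambda\beta(1)^2)}$ with $\lambda=(\alpha-1)/\beta(1)^2=\beta(1)^{-2}\!\left(2\beta(1)^4/\beta(2)^2-1\right)$. In either case the identity, obtained on a neighborhood of $0$, extends to all of $\D$ by analytic continuation, since $k$ is analytic on $\D$.

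Finally I would pin down the range of $\lambda$ in the second case. It is manifestly real, and $\lambda\ne 0$ since $\alpha\ne 1$. To see $\lambda>0$, suppose $\lambda<0$: then $k(z)=(1+|\lambda|z)^{r}$ with $r=1/(1-\alpha)>0$, whose Maclaurin coefficients are $\binom{r}{j}|\lambda|^j$. But the coefficients of $k$ are $1/\beta(j)^2>0$ for every $j$; if $r$ is a positive integer then $k$ is a polynomial, contradicting that $k$ has infinitely many nonzero coefficients, while if $r$ is not a positive integer the sign of $\binom{r}{j}$ alternates for large $j$, contradicting positivity. Hence $\lambda>0$. To see $\lambda\le 1$, observe that the Maclaurin series of $k$ has radius of convergence at least $1$ because $k$ is analytic on $\D$, whereas $(1-\lambda z)^{-1/(\lambda\beta(1)^2)}$ has its nearest singularity at $z=1/\lambda$ (the exponent is negative, hence not a nonnegative integer, so there is a genuine singularity there), forcing $1/\lambda\ge 1$.

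I expect the difficulty here to be a matter of care rather than depth: keeping the complex-analytic integration honest — the passage to $v=(\log k)'$ is precisely the device that sidesteps worrying about branches of inverse functions in a reduction-of-order argument — and handling cleanly the two endpoint-type arguments, namely the sign analysis of the binomial coefficients that rules out $\lambda<0$ and the radius-of-convergence argument that rules out $\lambda>1$. The remaining bookkeeping, verifying that the constant $\lambda$ coming out of the integration is $\beta(1)^{-2}\!\left(2\beta(1)^4/\beta(2)^2-1\right)$ and that the exponent is $-1/(\lambda\beta(1)^2)$, is routine.
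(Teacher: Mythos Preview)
Your proof is correct and follows the same overall strategy as the paper: invoke Propositions~\ref{FSTT} and~\ref{SSTT} to obtain the differential equation, solve it subject to $k(0)=1$ and $k'(0)=1/\beta(1)^2$, and then constrain $\lambda$ using positivity of all Maclaurin coefficients together with analyticity of $k$ on $\D$. Your substitution $v=(\log k)'$, which reduces the equation to $v'=(\alpha-1)v^2$, is a tidy variant of the paper's direct integration of $\gamma\,k'/k = k''/k'$ via principal logarithms on a small disk; the two routes are essentially equivalent, and your ordering (first rule out $\lambda<0$ by coefficient signs, then rule out $\lambda>1$ by the singularity at $1/\lambda$) simply reverses the paper's order without affecting the logic.
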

\begin{proof}  Thanks to Propositions~\ref{FSTT} and \ref{SSTT}, we know that under the hypotheses of the this theorem, $\psi$ and $\phi$ are given by (\ref{HF}) with $c$ as well as $a_1$ real, and $a_0$, $a_1$, and $c$ nonzero; we also know that $k$ satisfies (\ref{ODE1}), which means
\begin{equation}\label{ODE2}
\beta(1)^4\frac{k^{\prime}(z)^2}{k(z)}=\frac{\beta(2)^2}{2}k^{\prime\prime}(z), \quad \text{for all} \ z\in |a_0|\D.
\end{equation}
We solve the preceding equation subject to the initial conditions $k(0) =1$ and $k'(0) = 1/\beta(1)^2$.  

We begin by observing that because $k(0) = 1$ and $k'(0) = 1/\beta(1)^2 > 0$, there is an open disk $\D_1\subseteq |a_0|\D$ containing $0$ such that  both $k$ and $k'$ will have positive real part on $\D_1$.  We rewrite (\ref{ODE2}) as
$$
\frac{2\beta(1)^4}{\beta(2)^2} \frac{k'(z)}{k(z)}=\frac{k''(z)}{k'(z)}, \quad \text{for all} \ z\in \D_1.
$$
 Keeping in mind that $k$ and $k'$ have positive real part on $\D_1$, we see that the preceding equation yields
$$
\frac{2\beta(1)^4}{\beta(2)^2} \Log\, k = \Log\,  k' + C_1
$$
for some constant $C_1$, where the equation is valid on $\D_1$ and $\Log$ is the principal branch of the logarithm function.    We continue to work on $\D_1$ and
exponentiate both sides of the last equation;  upon setting 
$$
C_2 = e^{-C_1} \quad  \text{and}  \quad \gamma = \frac{2\beta(1)^4}{\beta(2)^2},
$$
we obtain
\begin{equation}\label{ODE3}
C_2=  k' e^{-\gamma \Log\, k} 
\end{equation}

We consider two cases,  the first being $\gamma=1$, so that (\ref{ODE3})  becomes
$$
C_2=\frac{k'}{k}.
$$
Upon integrating both sides of this equation, we obtain for some constant $C_3$ and all $z\in \D_1$
$$
C_2z+C_3= \Log\, k(z)
$$
or
$$
k(z) = e^{C_2z + C_3}.
$$
Using our initial conditions, $k(0) = 1$ and $k'(0) = 1/\beta(1)^2$, we obtain
$$
k(z) = e^{\frac{z}{\beta(1)^2}} \quad \text{for} \ z\in \D_1.
$$
Because each side of the preceding equation represents a function analytic on the entire unit disk $\D$, it follows that the equation holds on all of $\D$, yielding (\ref{cone}) from the statement of the theorem.  

For the second case, note $\gamma$ must be positive. We assume $\gamma \ne 1$ and integrate both sides of (\ref{ODE3}) to obtain that for some constant $C_3$ and all  $z\in \D_1$,
$$
C_2z + C_3 =  \frac{e^{(1-\gamma)\Log k(z)}}{1-\gamma}.
$$
Using our initial conditions, we obtain
$$ 
\left(1-\lambda z\right) = e^{(1-\gamma)\Log k(z)},  \quad z\in \D_1,
$$ 
where $\lambda:=-\frac{1-\gamma}{\beta(1)^2}$. 
Because $z\mapsto (1-\lambda z)$ takes the value $1$ at $0$,  we see that for all $z$ in some open disk containing $0$ and contained in $\D_1$, we must have
\begin{equation}\label{AEE}
e^{\frac{1}{1-\gamma} \Log(1-\lambda z)}= k(z).
\end{equation}
Because $k$ is analytic on $\D$, the left-hand side of (\ref{AEE}) extends to be analytic on $\D$ and agrees with $k$ at each of its points of analyticity within $\D$.  This means either  (a) $|\lambda|\le 1$ so that $z\mapsto (1-\lambda z)$ maps $\D$ into the right halfplane  or (b)  $1/(1-\gamma)$ is a positive integer.  However, (b) cannot hold because in this case $k$ would be a polynomial, contradicting the positivity of all of its Maclaurin coefficients.   Thus we conclude that $|\lambda| \le1$ and that
$$
k(z) = (1-\lambda z)^{\frac{1}{1-\gamma}} \quad \text{or}  \quad  k(z) = (1-\lambda z)^{-\frac{1}{\lambda \beta(1)^2}},
$$
where the root is the principal one.  Thus we have arrived at equation (\ref{cnotone}) in the statement of the theorem. All that remains is to show that $\lambda$, which equals $(\gamma - 1)/\beta(1)^2$, must be positive.     We know $\lambda$ is nonzero because $\gamma \ne 1$ in the present case.   If $\lambda$ were negative, the the binomial-series expansion of
$$
k(z) = (1-\lambda z)^{-\frac{1}{\lambda \beta(1)^2}}
$$
would contain some nonpositive coefficients contradicting the positivity of all $k$'s  Maclaurin coefficients.
\end{proof}

As a corollary of the proof of the preceding theorem, we obtain the following easily checked necessary condition for $H^2(\beta)$ to support nontrivial Hermitian weighted composition operators.
\begin{cor}  Let $k(z) = \sum_{j=0}^\infty z^j/\beta(j)^2$ be the generating function for $H^2(\beta)$, where $\beta(0) =1$.  If $H^2(\beta)$ supports a nontrivial Hermitian weighted composition operator, then
$$
0 \le \frac{1}{\beta(1)^2}\left( \frac{2\beta(1)^4}{\beta(2)^2} - 1\right) \le 1.
$$
\end{cor}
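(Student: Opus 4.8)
\textit{Proof proposal.} The plan is to read the corollary off directly from Theorem~\ref{NCMT}, of which it is essentially a one-line restatement in the form of a single inequality. First I would unwind the hypothesis. To say that $H^2(\beta)$ supports a nontrivial Hermitian weighted composition operator $\W$ means, by definition, that $\psi$ is not the zero function, that $\phi(0)\ne 0$, and that $\phi'(0)\ne 0$; moreover, since $\W$ is a bounded Hermitian operator on $H^2(\beta)$ we have $\W=\W^{*}$, so in particular the three scalar identities
$$
\W^{*}1=\W 1,\qquad \W^{*}z=\W z,\qquad \W^{*}z^{2}=\W z^{2}
$$
all hold. Thus every hypothesis of Theorem~\ref{NCMT} is satisfied for this $\W$.

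Next I would apply Theorem~\ref{NCMT} to conclude that the generating function $k$ of $H^2(\beta)$ takes one of the two forms (\ref{cone}) or (\ref{cnotone}). Writing $\gamma=2\beta(1)^{4}/\beta(2)^{2}$, the quantity appearing in the corollary is precisely $(\gamma-1)/\beta(1)^{2}$. In the exponential case (\ref{cone}) one has $\gamma=1$, so this quantity equals $0$, which lies in $[0,1]$. In the case (\ref{cnotone}) this quantity is exactly the number $\lambda$ of Theorem~\ref{NCMT}, which that theorem asserts is positive and at most $1$; hence again it lies in $[0,1]$. In either case the double inequality
$$
0\le \frac{1}{\beta(1)^{2}}\left(\frac{2\beta(1)^{4}}{\beta(2)^{2}}-1\right)\le 1
$$
holds, which is what we wanted.

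Since the argument is a single deduction from an already-established theorem, there is no genuine obstacle. The only points requiring care are bookkeeping ones: verifying that the expression $\frac{1}{\beta(1)^{2}}\left(\frac{2\beta(1)^{4}}{\beta(2)^{2}}-1\right)$ is literally the parameter $\lambda$ of Theorem~\ref{NCMT} and that it degenerates to $0$ exactly in the exponential case $\gamma=1$, and noting that the Hermitian (hence self-adjoint) nature of $\W$ is precisely what furnishes the three operator equations needed to invoke Theorem~\ref{NCMT}.
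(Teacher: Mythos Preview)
Your proposal is correct and follows essentially the same approach as the paper's own proof: both simply observe that the quantity in the corollary is exactly the parameter $\lambda$ from Theorem~\ref{NCMT} (degenerating to $0$ in the exponential case) and that $0\le\lambda\le 1$ was established there. Your version spells out a bit more explicitly why the hypotheses of Theorem~\ref{NCMT} are met, but the argument is the same.
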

\begin{proof}   In the proof of Theorem~\ref{NCMT}, $\lambda =  \dfrac{1}{\beta(1)^2}\left( \frac{2\beta(1)^4}{\beta(2)^2} - 1\right)$ and it was shown that $0 \le \lambda \le 1$ must hold if $H^2(\beta)$ supports a nontrivial Hermitian weighted composition operator.
\end{proof}

Thus, e.g., the space $H^2(\beta_\omega)$ with $\beta_\omega(0) =1$ and $\beta_\omega(j) = 2$  for all $j \ge 1$ supports no nontrivial Hermitian weighted composition operators (because $\lambda = 7/4 > 1$).  We remark that the formulas (\ref{HF}) can yield a bounded weighted composition operator  on this space $H^2(\beta_\omega)$;  for instance, in defining $\psi$ and $\phi$ via (\ref{HF}), one may choose $a_0=1/2$, $a_1 = 1/10$ and $c=1$, making $\psi$ bounded on $\D$ and $\phi$ a selfmap of $\D$.  Observe that  $f$ belongs to the space $H^2(\beta_\omega)$  if and only if $f$ belongs to the classical Hardy space $H^2$ of $\D$; in fact,  for $f$ in these spaces,
\begin{equation}\label{WSHS}
 \|f\|_{H^2} \le \|f\|_{H^2(\beta_\omega)} \le 2 \|f\|_{H^2}.
\end{equation}
 Because $\psi$ is bounded and analytic on $\D$, the operator $M_\psi$ of multiplication by $\psi$ is bounded on $H^2$. The composition operator $C_\phi$ is also  bounded on $H^2$ (see, e.g., \cite[Chapter 3]{CMB}). Thus the weighted composition operator $\W = M_\psi C_\phi$ is bounded on $H^2$ and the inequalities (\ref{WSHS}) show the same is true of $\W$ on $H^2(\beta_\omega)$.

We have shown that in order for $\W$ to be a nontrivial bounded Hermitian operator on $H^2(\beta)$ then the symbols $\psi$ and $\phi$ not only have to satisfy the restrictions $(\ref{HF})$ obtained in \cite{CGK}, but also the generating function $k$ must have its form specified by (\ref{cone})  or (\ref{cnotone})  of Theorem~\ref{NCMT} above (equivalently  (\ref{ocone})  or (\ref{ocnotone})  of our main theorem, Theorem~\ref{MT}).  In the next section, we show these necessary conditions for $\W$ to be nontrivially Hermitian  are sufficient.

\section{The necessary condition is sufficient}

The goal of this section of the paper is to complete the proof of  Theorem~\ref{MT}  by establishing that part (iii) implies part (i).    There are two cases to consider.

\subsection{Case 1: The generating function $k$ for $H^2(\beta)$ has the form (\ref{ocone})}
In Case 1, we have
$$
k(z) = e^{\frac{z}{\beta(1)^2}},
$$
and the formulas of (\ref{HF}) yield
$$
\psi(z) = c e^{\frac{\overline{a_0}z}{\beta(1)^2}}\quad \text{and} \quad  \phi(z) = a_0 + a_1z,
$$
where $c$ and $a_1$ are nonzero real constants; also  $|a_1| < 1$ since $a_0\ne 0$ and $\phi$ is a selfmap of $\D$. In this case, the weight sequence for $H^2(\beta)$, which we will denote $\beta_F$, is given by
$$
\beta_F(j) = (j!)^{1/2}b^j, \quad \text{where} \quad  b = \beta(1).
$$
We have
\begin{equation}\label{MSF}
H^2(\beta_F) = \{ f\in H(\D):  \sum_{j=0}^\infty  |\hat{f}(j)|^2b^{2j} j! < \infty\},
\end{equation}
where $(\hat{f}(j))$ is the Maclaurin-coefficient sequence of $f$.  Note that each $f$ in $H^2(\beta_F)$ must have an analytic extension from $\D$ to the entire complex plane:  if $\sum_{j=0}^\infty  |\hat{f}(j)|^2b^{2j} j! < \infty$, then in particular there must be a positive constant $M$ such that $|\hat{f}(j)| \le \dfrac{M}{b^j (j!)^{1/2}}$ for all $j\ge 0$, and it follows that $\sum_{j=0}^\infty \hat{f}(j) z^j$ converges for every $z\in \C$.   A computation shows that $H^2(\beta_F)$ is the Fock space $F^2_{1/b^2}$ and that the $H^2(\beta_F)$ norm of $f$, which we denote $\|f\|_F$, is given by
$$
\|f\|^2_F = \frac{1}{\pi b^2}  \int_\C |f(z)|^2 e^\frac{-|z|^2}{b^2}\, dA(z).
$$
It's not difficult to show that $H^2(\beta_F)$, that is, $F^2_{1/b^2}$, does not support any  multiplication operators with nonconstant symbols.  However, it does support nontrivial bounded composition operators (see, e.g., \cite{CMS}) and weighted composition operators (see, e.g., \cite{UK}).  We could use a necessary and sufficient condition for boundedness of $\W$ on Fock spaces  appearing in \cite{UK} to obtain boundedness of our ``candidate'' Hermitian weighted composition operators, but a simple, direct proof is available.  

\begin{prop}\label{CoCa} Suppose that $\phi(z) = a_0 + a_1z$ where $0< |a_1| < 1$ and $\psi(z) = ce^{\frac{\overline{a_0}z}{b^2}}$.  Then $W_{\psi, \phi}$ is a bounded operator on $H^2(\beta_F)$.
\end{prop}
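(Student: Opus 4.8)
The plan is to work directly with the integral form of the $H^2(\beta_F)$ norm recorded just above the proposition. Since every $f\in H^2(\beta_F)$ extends to an entire function, $\W f = \psi\cdot(f\circ\phi)$ is entire, and hence its Fock-space norm is given by
$$
\|\W f\|_F^2 = \frac{1}{\pi b^2}\int_\C |\psi(z)|^2\,|f(\phi(z))|^2\,e^{-|z|^2/b^2}\,dA(z)
$$
(with the value $+\infty$ a priori allowed). To prove boundedness it therefore suffices to show that this integral is at most a fixed constant times $\|f\|_F^2 = \frac{1}{\pi b^2}\int_\C |f(w)|^2 e^{-|w|^2/b^2}\,dA(w)$; the same estimate simultaneously shows $\W f\in H^2(\beta_F)$.

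Next I would use $|\psi(z)|^2 = |c|^2\exp\!\big(\tfrac{2}{b^2}\Re(\overline{a_0}z)\big)$ and perform the affine change of variables $w=\phi(z)=a_0+a_1z$, which is a diffeomorphism of $\C$ with real Jacobian $|a_1|^2$, so $dA(z)=|a_1|^{-2}\,dA(w)$ and $|z|^2=|w-a_0|^2/|a_1|^2$. This rewrites the integral as
$$
\|\W f\|_F^2 = \frac{|c|^2}{\pi b^2|a_1|^2}\int_\C |f(w)|^2\,e^{Q(w)}\,dA(w),\qquad
Q(w) = \frac{2}{b^2}\Re\!\Big(\frac{\overline{a_0}(w-a_0)}{a_1}\Big) - \frac{|w-a_0|^2}{b^2|a_1|^2},
$$
a real-valued quadratic polynomial in $\Re w$ and $\Im w$. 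Comparing with the integral that computes $\|f\|_F^2$, the whole question reduces to showing that $E(w):=Q(w)+|w|^2/b^2$ is bounded above on $\C$.

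This is the one point where the hypothesis $0<|a_1|<1$ is used, and it is essentially the only substantive step. The purely second-degree part of $Q(w)$ is $-|w|^2/(b^2|a_1|^2)$, so the second-degree part of $E$ is $\tfrac{|w|^2}{b^2}\big(1-\tfrac{1}{|a_1|^2}\big)$, which is negative definite precisely because $|a_1|<1$; the remaining terms of $E$ have degree $\le 1$ in $(\Re w,\Im w)$ and are therefore dominated. Hence $E(w)\to-\infty$ as $|w|\to\infty$, and, being continuous, $E$ attains a finite maximum $\log M$ on $\C$. (One could even compute $M$ exactly by completing the square, but no such precision is needed.) It then follows that
$$
\|\W f\|_F^2 \le \frac{|c|^2 M}{|a_1|^2}\cdot\frac{1}{\pi b^2}\int_\C |f(w)|^2 e^{-|w|^2/b^2}\,dA(w) = \frac{|c|^2 M}{|a_1|^2}\,\|f\|_F^2,
$$
so $\W$ is bounded on $H^2(\beta_F)$ with $\|\W\|\le |c|\sqrt{M}/|a_1|$. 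The main obstacle here is not any one hard estimate but merely the bookkeeping — making sure the affine substitution is legitimate (it is) and that $\W f$ is genuinely entire so that the Fock integral computes its norm — together with the observation that the dominant quadratic term has the right sign; once that sign is secured the linear terms are harmless.
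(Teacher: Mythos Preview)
Your proof is correct and follows essentially the same route as the paper's: both use the Fock integral norm, perform the affine change of variable $w=\phi(z)$, and then reduce boundedness to the observation that the resulting exponent $E(w)=Q(w)+|w|^2/b^2$ is bounded above on $\C$ because $|a_1|<1$ makes its quadratic part negative definite. The only cosmetic difference is that the paper replaces $\Re(\overline{a_0}(w-a_0)/a_1)$ by the cruder bound $|a_0||w-a_0|/|a_1|$ before taking the supremum, whereas you keep the exact linear term; either way the same domination argument finishes the proof.
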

\begin{proof}  In preparation for a change of variable, note that $\phi^{-1}(z) = (z-a_0)/a_1$.  We have for each $f\in H^2(\beta_F)$,
\begin{eqnarray*}
\|W_{\psi, \phi} f\|_F^2  &=& \frac{|c|^2}{\pi b^2} \int_\C | e^\frac{\overline{a_0}z}{b^2} (f\circ \phi)(z)|^2 e^{-\frac{|z|^2}{b^2}}\, dA(z)\\
& \le & \frac{|c|^2}{\pi b^2} \int_\C |f(z)|^2 \exp\left(\frac{2|\overline{a_0}|||z-a_0|/|a_1| -|z-a_0|^2/|a_1|^2}{b^2}\right) \frac{1}{|a_1|^2}\, dA(z)\\
& \le  &  \frac{|c|^2}{|a_1|^2 }\sup\left\{\exp\left(\frac{2|\overline{a_0}|||z-a_0|/|a_1|+|z|^2 -|z-a_0|^2/|a_1|^2}{b^2}\right): z\in \C\right\}\|f\|_F^2.
\end{eqnarray*}
Observe that the supremum appearing in the last line of the preceding display is finite and the proposition follows. 

\end{proof}

Thus we see that in Case 1,  the operator $\W$ with symbols $\psi(z) = ce^{\overline{a_0}z/\beta(1)^2}$ and $\phi(z) = a_0 + a_1z$ is bounded on  the weighted Hardy space $H^2(\beta_F)$ having generating function $k(z) = e^{z/\beta(1)^2}$.   To see $ \W$ is Hermitian,  it suffices to show $\W^*K_w = \W K_w$ for all $w\in \D$. Let $w\in \D$ be arbitrary; we have for every $z\in \D$, 
\begin{eqnarray*}
(\W^* K_w)(z) & = & \overline{\psi(w)} K_{\phi(w)}(z)\\
& = & c\exp\left(\frac{a_0\bar{w}}{\beta(1)^2}\right) \exp\left(\frac{(\overline{a_0} + a_1\bar{w})z}{\beta(1)^2}\right)\\
& = & c \exp\left(\frac{\overline{a_0}z}{\beta(1)^2}\right) \exp\left(\frac{(a_0 + a_1z)\bar{w}}{\beta(1)^2}\right)\\
& = & (\W K_w)(z),
\end{eqnarray*}
as desired.

\subsection{Case 2: The generating function $k$ for $H^2(\beta)$ has the form (\ref{ocnotone})}
In Case 2, we have
$$
k(z) = (1-\lambda z)^{\frac{-1}{\lambda\beta(1)^2}},
$$
where $0 < \lambda \le 1$.  The  formulas of (\ref{HF}) yield
\begin{equation}\label{GNE}
\psi(z) = c (1-\lambda \overline{a_0}z)^{\frac{-1}{\lambda\beta(1)^2}} \quad \text{and} \quad  \phi(z) = a_0 + \frac{a_1z}{1-\lambda \overline{a_0} z}
\end{equation}
where $c$ and $a_1$ are nonzero real constants.  Of course, we continue to assume that  $\phi$ is a selfmap of $\D$ and $a_0  = \phi(0)$ is nonzero. {\em Note well that because $|a_0| < 1$, the function $\psi$ will be bounded on $\D$ for every $\lambda \in (0, 1]$.}

  Assuming for the moment that $\W$ is bounded, a routine computation shows $(\W^* K_w)(z) = (\W K_w)(z)$ for all $z\in \D$ with both sides simplifying to 
$$
c\left(\rule{0in}{0.15in} 1 - \lambda \overline{a_0}z - \lambda \overline{w}a_0(1-\lambda \overline{a_0}z) - \lambda \overline{w}a_1 z\right)^{\frac{-1}{\lambda\beta(1)^2}}.
$$
Thus $\W$ is Hermitian.  We now prove that  $\W$ acts boundedly on $H^2(\beta)$ in Case 2.

 Set $\eta = \frac{1}{\lambda\beta(1)^2}$  and for the moment let $\lambda =1$.   Thus we are interested in the weighted Hardy spaces $H^2(\beta_\eta)$ having generating functions of the form
 $$
 k_\eta(z) := (1-z)^{-\eta}, \quad \text{where}\  \eta > 0.
 $$
 In \cite{CGK}, Cowen et al.\ establish that whenever $\psi$ and $\phi$ are given by (\ref{GNE}) (with $\lambda  =1$), then $\W$ acts boundedly on $H^2(\beta_\eta)$ for $\eta \ge 1$.    The argument goes as follows.  Suppose that $\eta = 1$; then $H^2(\beta) = H^2$ is the classical Hardy space, so that if $f\in H^2(\beta_1)$, the square of its norm is $\frac{1}{2\pi} \int_0^{2\pi} |f(e^{it})|^2\, dt$, where $f$ is the radial limit function of $f$.    When $\eta >1$, $H^2(\beta)$ is the weighted Bergman space $L^2_a(\D, (1-|z|)^{\eta -2}\, dA(z))$  consisting of functions $f$ holomorphic on $\D$ for which 
 $$
 \int_\D |f(z)|^2(\eta-1)(1-|z|^2)^{\eta - 2}\, dA(z)/\pi < \infty;
 $$
 in fact, the integral on the left of preceding inequality equals the square of the $H^2(\beta_\eta)$  norm of $f$.    It is known that every analytic selfmap $\phi$ of $\D$ induces a bounded composition operator on the Hardy space $H^2$ as well as on the weighted Bergman spaces $L^2_a(\D, (1-|z|)^{\eta -2}\, dA(z))$, $\eta > 1$ (see, e.g., \cite[Chapter 3]{CMB}).  Moreover, the integral representations of the norms on these Hardy and Bergman spaces make it clear that whenever $\psi\in H(\D)$ is bounded on $\D$, then the multiplication operator $f\mapsto \psi f$ is bounded.  Thus, being the product of two bounded operators, $\W$ is bounded on $H^2(\beta_\eta)$ when $\eta \ge 1$.     We wish to show that $\W$ is bounded on $H^2(\beta_\eta)$ when $0 < \eta  <1$.   We depend on the following lemma.

  \begin{lem}\label{DerL} Let $0 < \eta < 1$. 
   The function $f$ belongs to $H^2(\beta_\eta)$ if and only if  $f'$ belongs to $H^2(\beta_{\eta+2})$, which is the weighted Bergman space
  $L^2_a(\D, (1-|z|)^{\eta}\, dA(z))$.   
  \end{lem}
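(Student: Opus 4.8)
The plan is to reduce the lemma to an elementary comparison of the two weight sequences $\beta_\eta$ and $\beta_{\eta+2}$, keeping careful track of the shift of index that differentiation produces, and to dispose of the constant term of $f$ (which is annihilated by differentiation) as a single harmless finite summand. First I would write the weights down from the binomial series: since $k_\eta(z)=(1-z)^{-\eta}=\sum_{j\ge0}\frac{\Gamma(\eta+j)}{\Gamma(\eta)\,j!}\,z^j$, we have
\[
\beta_\eta(j)^2=\frac{j!\,\Gamma(\eta)}{\Gamma(\eta+j)},\qquad \beta_{\eta+2}(j)^2=\frac{j!\,\Gamma(\eta+2)}{\Gamma(\eta+j+2)}.
\]

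Next I would verify the parenthetical identification of $H^2(\beta_{\eta+2})$ as a Bergman space, which is exactly the computation already used in this section for the range $\eta>1$, now applied with the parameter shifted by $2$: a polar-coordinates evaluation gives $\int_\D|z|^{2j}(1-|z|^2)^{\eta}\,dA(z)=\pi\,\frac{j!\,\Gamma(\eta+1)}{\Gamma(\eta+j+2)}$, so the (mutually orthogonal) monomials $z^j$ have squared norm $\beta_{\eta+2}(j)^2$ in $L^2_a\bigl(\D,\tfrac{\eta+1}{\pi}(1-|z|^2)^{\eta}\,dA(z)\bigr)$; and since $1-|z|\le 1-|z|^2\le 2(1-|z|)$ on $\D$ and $\eta>0$, replacing $(1-|z|^2)^{\eta}$ by $(1-|z|)^{\eta}$ alters the norm only by bounded factors. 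Hence $H^2(\beta_{\eta+2})=L^2_a(\D,(1-|z|)^{\eta}\,dA(z))$ with equivalent norms, which is the second assertion of the lemma.

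The core step is the coefficient comparison. Writing $f=\sum_{j\ge0}\hat{f}(j)z^j$, so that $f'=\sum_{j\ge1}j\,\hat{f}(j)\,z^{j-1}$, we have
\[
\|f'\|_{H^2(\beta_{\eta+2})}^2=\sum_{j\ge1}j^2|\hat{f}(j)|^2\,\beta_{\eta+2}(j-1)^2,\qquad \|f\|_{H^2(\beta_\eta)}^2=|\hat{f}(0)|^2+\sum_{j\ge1}|\hat{f}(j)|^2\,\beta_\eta(j)^2.
\]
Using the two weight formulas together with $\Gamma(\eta+2)/\Gamma(\eta)=\eta(\eta+1)$ and $\Gamma(\eta+j)/\Gamma(\eta+j+1)=1/(\eta+j)$, a one-line manipulation gives, for every $j\ge1$,
\[
\frac{j^2\,\beta_{\eta+2}(j-1)^2}{\beta_\eta(j)^2}=\eta(\eta+1)\,\frac{j}{j+\eta}.
\]
Because $0<\eta<1$, the factor $j/(j+\eta)$ is increasing in $j$ and lies in $[\tfrac1{1+\eta},1)$, so this ratio stays between the positive constants $\eta$ and $\eta(\eta+1)$, which depend only on $\eta$.

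To finish, I would note that $|\hat{f}(0)|^2$ is finite for every $f\in H(\D)$, so $f\in H^2(\beta_\eta)$ if and only if $\sum_{j\ge1}|\hat{f}(j)|^2\beta_\eta(j)^2<\infty$; by the two-sided bound just displayed this is equivalent to $\sum_{j\ge1}j^2|\hat{f}(j)|^2\beta_{\eta+2}(j-1)^2<\infty$, i.e.\ to $f'\in H^2(\beta_{\eta+2})$. I do not expect a genuine obstacle: the only points calling for care are the bookkeeping with the Gamma-function ratios and the index shift $j\mapsto j-1$, together with the elementary comparison of the weights $(1-|z|)^{\eta}$ and $(1-|z|^2)^{\eta}$.
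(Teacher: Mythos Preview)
Your proof is correct and follows essentially the same route as the paper: both arguments reduce to showing that the termwise ratio $j^2\beta_{\eta+2}(j-1)^2/\beta_\eta(j)^2$ is bounded above and below by positive constants depending only on $\eta$, and both arrive at the same expression $\eta(\eta+1)\,j/(j+\eta)$ (the paper writes it as $\eta(\eta+1)\cdot\tfrac{j}{j+1}\cdot\tfrac{\beta_\eta(j+1)^2}{\beta_\eta(j)^2}$ and bounds the two factors separately, which is the same thing). Your version is a bit more explicit in two harmless places: you spell out the Bergman-space identification via the Beta integral and the comparison of $(1-|z|)^\eta$ with $(1-|z|^2)^\eta$, and you isolate the constant term $|\hat f(0)|^2$; the paper simply takes these as understood.
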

  \begin{proof}
  Suppose that $f\in H^2(\beta_\eta)$, a space  having generating function
  $$
  k_\eta(z) = (1-z)^{-\eta} = \sum_{j=0}^\infty \frac{z^j}{\beta_\eta(j)^2}, 
  $$
  where
  $$
 \frac{1}{\beta_\eta(j)^2} = \frac{1}{j!}\prod_{m=0}^{j-1} (\eta + m).
  $$
  Observe that for $j\ge 0$,
  $$
 1\le  \frac{\beta_\eta(j+1)^2}{\beta_\eta(j)^2} = \frac{j+1}{\eta +j}\le \frac{1}{\eta}
  $$
  
  Via direct computation or through the observation that  
  $$
  k_{\eta+2} = \frac{1}{\eta(\eta+1)}k''_\eta(z) = \frac{1}{\eta(\eta+1)} \sum_{j=0}^\infty \frac{(j+2)(j+1)z^j}{\beta_\eta(j+2)^2},
  $$
   we have
$$
\frac{1}{\beta_{\eta+2}(j)^2} =   \frac{(j+1)(j+2)}{\eta(\eta+1)}\frac{1}{\beta_\eta(j+2)^2}.
$$
Thus  if  $f(z) = \sum_{j=0}^\infty \hat{f}(j)z^j$, then
\begin{align*}
\|f'\|^2_{H^2(\beta_{\eta+2})}  &= \sum_{j=1}^\infty j^2 |\hat{f}(j)|^2\beta_{\eta+2}(j-1)^2\\
& = \eta(\eta+1) \sum_{j=1}^\infty \frac{j}{j+1} |\hat{f}(j)|^2\beta_{\eta}(j+1)^2\\
& =    \eta(\eta+1) \sum_{j=1}^\infty \left(\frac{j}{j+1} |\hat{f}(j)|^2\beta_\eta(j)^2 \frac{\beta_{\eta}(j+1)^2}{\beta_\eta(j)^2}\right).
\end{align*}
Because $\frac{j}{j+1} \frac{\beta_{\eta}(j+1)^2}{\beta_\eta(j)^2}$ is bounded above by $1/\eta$ and below by $1/2$, the  preceding calculation of $\|f'\|^2_{H^2(\beta_{\eta+2})}$ establishes the lemma.
\end{proof}

\begin{prop}\label{BE1} Let $H^2(\beta_\eta)$ be the weighted Hardy space with generating function   $k_\eta(z) = (1-z)^{-\eta}$, where $\eta = 1/\beta(1)^2$. Suppose that $\psi$ and $\phi$ are given by $(\ref{GNE})$ with $k_\eta$ replacing $k$, which means
$$
\psi(z) = c (1- \overline{a_0}z)^{\frac{-1}{\beta(1)^2}} \quad \text{and} \quad  \phi(z) = a_0 + \frac{a_1z}{1- \overline{a_0} z},
$$
where $c$ and $a_1$ are real constants, $a_0  = \phi(0)$,  and $\phi$ is a selfmap of $\D$.
Then for every $\eta > 0$, $\W$ is a bounded operator on $H^2(\beta_\eta)$.
\end{prop}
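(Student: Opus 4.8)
The plan is to split on the size of $\eta$. For $\eta \ge 1$ there is nothing new to do: as recalled in the discussion preceding Lemma~\ref{DerL}, on $H^2(\beta_\eta)$ the operator $\W$ factors as $M_\psi C_\phi$, a bounded multiplication operator (here $\psi$ is bounded on $\D$ because $|a_0| = |\phi(0)| < 1$) composed with a bounded composition operator (since $H^2(\beta_\eta)$ is the Hardy space or a standard weighted Bergman space and $\phi$ is an analytic selfmap of $\D$). So I would assume henceforth that $0 < \eta < 1$ and transfer the whole problem to the weighted Bergman space $H^2(\beta_{\eta+2}) = L^2_a(\D, (1-|z|)^{\eta}\,dA(z))$ by means of Lemma~\ref{DerL}. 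The computation in the proof of that lemma in fact shows that there are positive constants $A, B$ (depending only on $\eta$; namely the bounds $1/2$ and $1/\eta$ on $\tfrac{j}{j+1}\tfrac{\beta_\eta(j+1)^2}{\beta_\eta(j)^2}$, times $\eta(\eta+1)$) with
$$
A\big(\|g\|^2_{H^2(\beta_\eta)} - |g(0)|^2\big) \le \|g'\|^2_{H^2(\beta_{\eta+2})} \le B\big(\|g\|^2_{H^2(\beta_\eta)} - |g(0)|^2\big)
$$
for every $g \in H^2(\beta_\eta)$. Hence it suffices to produce a constant $M$ such that $|(\W f)(0)| \le M\|f\|_{H^2(\beta_\eta)}$ and $\|(\W f)'\|_{H^2(\beta_{\eta+2})} \le M\|f\|_{H^2(\beta_\eta)}$ for all $f$. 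The first bound is immediate from $(\W f)(0) = c\,f(a_0) = c\,\langle f, K_{a_0}\rangle$, which gives $|(\W f)(0)| \le |c|\,\|K_{a_0}\|_{H^2(\beta_\eta)}\,\|f\|_{H^2(\beta_\eta)}$.

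For the derivative bound I would use the product rule,
$$
(\W f)' = \psi'\cdot(f\circ\phi) + (\psi\phi')\cdot(f'\circ\phi),
$$
and estimate the two summands in the $H^2(\beta_{\eta+2})$ norm. A direct computation gives $\psi'(z) = c\eta\overline{a_0}(1-\overline{a_0}z)^{-\eta-1}$ and $\psi(z)\phi'(z) = ca_1(1-\overline{a_0}z)^{-\eta-2}$, and both of these are bounded analytic functions on $\D$ precisely because $|a_0| < 1$. Since $\eta + 2 > 2$, the space $H^2(\beta_{\eta+2})$ is a standard weighted Bergman space, on which multiplication by a bounded analytic function is (trivially) bounded and on which $C_\phi$ is bounded for every analytic selfmap $\phi$ of $\D$ (see, e.g., \cite[Chapter 3]{CMB}). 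Thus the second summand satisfies $\|(\psi\phi')\cdot(f'\circ\phi)\|_{H^2(\beta_{\eta+2})} \le \|\psi\phi'\|_\infty\,\|C_\phi\|\,\|f'\|_{H^2(\beta_{\eta+2})}$, and by Lemma~\ref{DerL} the last factor is at most a constant times $\|f\|_{H^2(\beta_\eta)}$.

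For the first summand the point is that the product rule forces us to control $f\circ\phi$ itself (not just its derivative) in the Bergman norm. For this I would record the continuous inclusion $H^2(\beta_\eta)\subseteq H^2(\beta_{\eta+2})$: from $1/\beta_\eta(j)^2 = \tfrac{1}{j!}\prod_{m=0}^{j-1}(\eta+m)$ one telescopes to get $\beta_{\eta+2}(j)^2/\beta_\eta(j)^2 = \dfrac{\eta(\eta+1)}{(\eta+j)(\eta+j+1)} \le 1$ for every $j \ge 0$, so $\|f\|_{H^2(\beta_{\eta+2})} \le \|f\|_{H^2(\beta_\eta)}$. Consequently $\|f\circ\phi\|_{H^2(\beta_{\eta+2})} = \|C_\phi f\|_{H^2(\beta_{\eta+2})} \le \|C_\phi\|\,\|f\|_{H^2(\beta_\eta)}$, whence $\|\psi'\cdot(f\circ\phi)\|_{H^2(\beta_{\eta+2})} \le \|\psi'\|_\infty\,\|C_\phi\|\,\|f\|_{H^2(\beta_\eta)}$. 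Adding the two summand estimates bounds $\|(\W f)'\|_{H^2(\beta_{\eta+2})}$ by a constant times $\|f\|_{H^2(\beta_\eta)}$, and combining with the bound on $|(\W f)(0)|$ completes the argument.

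There is no step of real depth here; the only things requiring a little care are that the quantitative form of Lemma~\ref{DerL} (the norm comparison displayed above) really is what that lemma's proof delivers, and the recognition that, because of the $\psi'\cdot(f\circ\phi)$ term, one needs the elementary embedding $H^2(\beta_\eta)\subseteq H^2(\beta_{\eta+2})$ in addition to the derivative lemma. Everything else is boundedness of $M_\psi$ and $C_\phi$ on a standard weighted Bergman space.
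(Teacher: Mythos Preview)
Your proof is correct and follows essentially the same route as the paper's: split on $\eta$, and for $0<\eta<1$ use Lemma~\ref{DerL} together with the inclusion $H^2(\beta_\eta)\subseteq H^2(\beta_{\eta+2})$, boundedness of $\psi,\psi',\phi'$, and boundedness of $C_\phi$ on the weighted Bergman space to control $(\W f)'$ via the product rule. The only difference is that the paper invokes the closed graph theorem, reducing the task to showing mere membership $\W f\in H^2(\beta_\eta)$ and thereby bypassing your explicit constant-tracking and the separate bound on $|(\W f)(0)|$.
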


\begin{proof}    We've already pointed out that $\W$ is bounded if $\eta \ge 1$.  Suppose that $0<\eta < 1$.   By the closed graph theorem, to establish the boundedness of $\W$ in general, we need only show that $\W f$ belongs to $H^2(\beta_\eta)$ whenever $f\in H^2(\beta_\eta)$.

Let $\psi$ and $\phi$ be as in the statement of the proposition.  Observe that $\psi$, $\psi'$, and $\phi'$ are all bounded on $\D$.  Let $f\in H^2(\beta_\eta)$ be arbitrary.  We show $\W f$ belongs to $H^2(\beta_\eta)$ by establishing that the derivative of $\psi f\circ \phi$ must belong to $L^2_a(\D, (1-|z|)^{\eta}\, dA(z))=H^2(\beta_{\eta+2})$.   Because $\beta_{\eta +2}(j) \le \beta_\eta(j)$  for all $j\ge 0$,  $H^2(\beta_\eta)\subseteq H^2(\beta_{\eta+2})=L^2_a(\D, (1-|z|)^{\eta}\, dA(z))$  and thus  $f$ belongs to $L^2_a(\D, (1-|z|)^{\eta}\, dA(z))$.  By Lemma~\ref{DerL}, $f'$ belongs to $L^2_a(\D, (1-|z|)^{\eta}\, dA(z))$.  Because $L^2_a(\D, (1-|z|)^{\eta}\, dA(z))$  is preserved by composition with any selfmap, both $f\circ \phi$ and $f'\circ \phi$ belong to $L^2_a(\D, (1-|z|)^{\eta}\, dA(z))$. Finally, because $\psi$, $\psi'$, and $\phi'$ are bounded on $\D$, we see $\psi\phi' f'\circ \phi + \psi' f\circ \phi = (\psi f\circ\phi)'$ belongs to $L^2_a(\D, (1-|z|)^{\eta}\, dA(z))$.  Thus $\W f$ belongs to $H^2(\beta_\eta)$ by Lemma~\ref{DerL}.  
\end{proof}

At this point we have shown that $\W$ is a bounded operator on  $H^2(\beta_\eta)$  whenever  $\eta >0$, where $\psi$ and $\phi$ are given by  (\ref{GNE}) with $\lambda =1$ and $\frac{1}{ \beta(1)^2} = \eta$.   To complete the proof that part (iii) implies part (i) of  Theorem~\ref{MT},  we must remove the restriction $\lambda = 1$.  

We depend on the following proposition, which is a generalization of \cite[Corollary 2.3]{CK}.

\begin{prop}\label{Dlem} Suppose that 
$$
\phi(z) = a_0 + \frac{a_1z}{1-\overline{a_0}\lambda z},
$$
where $a_1$ is real and $0 < \lambda \le 1$.   Let $\rho \le 1/\sqrt{\lambda}$ be a positive number such that $\rho|a_0|\lambda < 1$.  Then $\phi$ is a selfmap of the disk $\rho\D$ if and only if $|a_0| < \rho$ and
\begin{equation}\label{CSM}
 \frac{(1+|a_0|\lambda\rho)(|a_0| - \rho)}{\rho}  \le a_1 \le \frac{(\rho - |a_0|)(1-|a_0|\lambda\rho)}{\rho}.
\end{equation}
\end{prop}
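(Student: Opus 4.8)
The plan is to transfer the selfmap condition to the boundary circle $|z|=\rho$ by the maximum modulus principle, and then to see that the resulting restriction on $a_1$ is governed by two quadratic inequalities whose roots turn out to be remarkably simple. First I would normalize: writing $a_0=|a_0|e^{i\theta}$ and $u=|a_0|$, a direct substitution gives $\phi(e^{i\theta}w)=e^{i\theta}\bigl(u+a_1w/(1-u\lambda w)\bigr)$, and since $w\mapsto e^{i\theta}w$ is an automorphism of every disk centered at $0$, $\phi$ is a selfmap of $\rho\D$ exactly when the map obtained by replacing $a_0$ by $u=|a_0|$ is. All hypotheses are preserved, so from now on I assume $a_0=u\ge 0$.

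Next I would reduce to the boundary. If $\phi$ is a selfmap of $\rho\D$ then $\phi(0)=a_0\in\rho\D$, so $|a_0|<\rho$; conversely assume $|a_0|<\rho$. The hypothesis $\rho|a_0|\lambda<1$ puts the unique pole $1/(\overline{a_0}\lambda)$ of $\phi$ strictly outside $\overline{\rho\D}$, so $\phi$ is holomorphic on a neighborhood of $\overline{\rho\D}$. By the maximum modulus principle, $\phi(\rho\D)\subseteq\rho\D$ if and only if $|\phi|\le\rho$ on $\{|z|=\rho\}$: one direction is continuity, and for the other, $|\phi|\le\rho$ on $\overline{\rho\D}$ follows from the maximum principle, while $|\phi(z_0)|=\rho$ at an interior point would force $\phi\equiv\phi(0)=a_0$ with $|a_0|<\rho$, which is absurd. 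So it remains to find, given $u<\rho$, the real $a_1$ for which $|\phi(\rho e^{it})|\le\rho$ for all $t$.

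The main work is this boundary computation. Put $c=u\lambda\rho\in[0,1)$. Since $\phi(\rho e^{it})=\bigl(u+\rho(a_1-u^2\lambda)e^{it}\bigr)/(1-ce^{it})$ with $1-ce^{it}\neq0$, the inequality $|\phi(\rho e^{it})|^2\le\rho^2$ becomes $|u+\rho(a_1-u^2\lambda)e^{it}|^2\le\rho^2|1-ce^{it}|^2$; expanding both modulus-squares (each a binomial with real coefficients) rewrites this as an inequality affine in $\cos t$,
$$
P\cos t\le Q,\qquad P=2u\rho\bigl(a_1+\lambda(\rho^2-u^2)\bigr),\quad Q=(\rho^2-u^2)(1+c^2)+2uc\rho\,a_1-\rho^2a_1^2,
$$
which holds for every $t$ precisely when $|P|\le Q$, i.e.\ when $Q-P\ge0$ and $Q+P\ge0$. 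Viewing $Q\mp P$ as quadratics in $a_1$ with leading coefficient $-\rho^2$, a short calculation gives
$$
Q-P=-\rho^2a_1^2-2u\rho(1-c)a_1+(\rho^2-u^2)(1-c)^2,\qquad Q+P=-\rho^2a_1^2+2u\rho(1+c)a_1+(\rho^2-u^2)(1+c)^2,
$$
whose real roots are $a_1=(1-c)(\rho-u)/\rho,\ -(1-c)(\rho+u)/\rho$ and $a_1=(1+c)(u-\rho)/\rho,\ (1+c)(u+\rho)/\rho$ respectively. Hence $|P|\le Q$ holds exactly when $a_1$ lies in the intersection of the two closed intervals these root pairs bound; using $u<\rho$ together with $\rho\le1/\sqrt{\lambda}$ (i.e.\ $1-\lambda\rho^2\ge0$) to decide which endpoints bind, one finds this intersection is $\frac{(1+u\lambda\rho)(u-\rho)}{\rho}\le a_1\le\frac{(\rho-u)(1-u\lambda\rho)}{\rho}$, which is (\ref{CSM}) since $u=|a_0|$. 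Together with $|a_0|<\rho$ this finishes the proof, and the case $\lambda=\rho=1$ recovers \cite[Corollary 2.3]{CK}.

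The main obstacle is entirely in that last step's bookkeeping: performing the boundary expansion without sign errors, confirming the clean factorizations of $Q\mp P$, and above all identifying correctly which of the four roots are the operative bounds — this is exactly where $\rho\le1/\sqrt{\lambda}$ enters, and ending up with $\frac{(1+u\lambda\rho)(u-\rho)}{\rho}$ rather than $-\frac{(1-u\lambda\rho)(u+\rho)}{\rho}$ as the lower endpoint is the one comparison that genuinely requires care.
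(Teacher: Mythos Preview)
Your argument is correct and structurally parallel to the paper's: both normalize away the argument of $a_0$, both invoke the maximum modulus principle to reduce the selfmap condition to the boundary circle, and both identify $\rho\le 1/\sqrt{\lambda}$ as precisely the hypothesis needed to select the correct lower endpoint. The difference lies in how the boundary condition is analyzed. The paper observes geometrically that $t\mapsto |a_0|+\dfrac{a_1\rho e^{it}}{1-|a_0|\lambda\rho e^{it}}$ traces a circle whose diameter lies on the real axis (with endpoints at $t=0$ and $t=\pi$), so the maximum of its modulus is simply the larger of two real numbers in absolute value; this yields the four linear inequalities (\ref{INEQ1})--(\ref{INEQ2}) directly, two of which are then shown to be redundant. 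You instead expand $|\phi(\rho e^{it})|^2\le\rho^2$ algebraically into an inequality affine in $\cos t$, reduce to $Q\pm P\ge 0$, and factor the resulting quadratics in $a_1$. The paper's route is shorter and more conceptual (one line of M\"obius geometry replaces your quadratic-formula computation), while yours is entirely self-contained and makes transparent why the four candidate endpoints arise and exactly which comparison invokes $\rho\le 1/\sqrt{\lambda}$.
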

\begin{proof}  If $a_0 = 0$ or if $a_1 = 0$, then the proposition is easily seen to hold.  Suppose that both $a_0$ and $a_1$ are nonzero.  In this case,  the estimates on  $a_1$  given by  (\ref{CSM})  cannot hold unless $|a_0| < \rho$.  Thus, to complete the proof of the proposition, we show that  $\phi$ is s selfmap of $\rho \D$ (with $a_0$ and $a_1$ nonzero) if and only if (\ref{CSM}) holds.   

The assumption that $\rho < 1/(|a_0|\lambda)$ means $\phi$ is analytic on the closed disk $\rho\D^-$.  Our assumption that $a_1\ne 0$, means that $\phi$ is nonconstant.   Thus by the Maximum-Modulus Theorem, for every $z\in \rho\D$,
\begin{align}
|\phi(z)|&  < \max_{t\in [0, 2\pi)} \left| a_0 + \frac{a_1 \rho \frac{a_0}{|a_0|} e^{it}}{1-|a_0|\lambda\rho e^{it}}\right| \nonumber\\
  & = \max_{t\in [0, 2\pi)} \left| |a_0| + \frac{a_1 \rho e^{it}}{1-|a_0|\lambda\rho e^{it}}\right|. \label{MMTC}
  \end{align}
 Let $\Gamma: [0, 2\pi)\rightarrow \C$ be given by $\Gamma(t) = |a_0| +  (a_1 \rho e^{it})(1-|a_0|\lambda\rho e^{it})^{-1}$ and observe that the image of $\Gamma$ is a circle with diameter joining the real numbers
 $
|a_0| -  (a_1 \rho)(1+|a_0|\lambda\rho)^{-1}$   and  $ |a_0| +  (a_1 \rho )(1-|a_0|\lambda\rho)^{-1}$. Thus, the maximum of (\ref{MMTC}) is attained at when either $e^{it} =1$ or $e^{it} =-1$, and hence
 $\phi$ is a selfmap of $\rho D$ if and only if 
  \begin{equation}\label{INEQ1}
  -\rho \le |a_0| -  \frac{a_1 \rho}{1+|a_0|\lambda\rho}  \le \rho
    \end{equation}
  and
   \begin{equation}\label{INEQ2}
  -\rho \le |a_0| + \frac{a_1 \rho }{1-|a_0|\lambda\rho}  \le \rho.
  \end{equation}
 The inequalities on the right of  (\ref{INEQ1}) and (\ref{INEQ2}) yield, respectively, the inequalities on the left and right of (\ref{CSM}).  Moreover, the inequalities on the left of (\ref{INEQ1}) and (\ref{INEQ2}) are automatically satisfied if  (\ref{CSM}) holds (where that on the left of  (\ref{INEQ2})  is satisfied owing to our assumption that $\rho \le 1/\sqrt{\lambda}$).  These observations complete the proof.     \end{proof}

\begin{cor}\label{SMC}  Suppose that $a_1$ is real, that $0 < \lambda \le 1$, and that
$$
\phi(z) = a_0 + \frac{a_1z}{1-\overline{a_0}\lambda z}
$$
is a selfmap of $\D$; then $\phi$ is also a selfmap of $\frac{1}{\sqrt{\lambda}}\D$.
\end{cor}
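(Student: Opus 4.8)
The plan is to apply Proposition~\ref{Dlem} twice: once with $\rho = 1$ to read off what the selfmap-of-$\D$ hypothesis says about $a_0$ and $a_1$, and once with $\rho = 1/\sqrt{\lambda}$ to check that those same constraints force $\phi$ to map $\frac{1}{\sqrt{\lambda}}\D$ into itself. First note that since $\phi$ is a selfmap of $\D$ we have $\phi(0) = a_0 \in \D$, so $|a_0| < 1$; because $0 < \lambda \le 1$ this also gives $|a_0|\lambda < 1$, $|a_0|\sqrt{\lambda} < 1$, and $1 \le 1/\sqrt{\lambda}$. The degenerate cases $a_0 = 0$ and $a_1 = 0$ are already subsumed by Proposition~\ref{Dlem}, where the conclusion is immediate, so I would assume $a_0$ and $a_1$ are both nonzero.

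Applying Proposition~\ref{Dlem} with $\rho = 1$ (its hypotheses $\rho \le 1/\sqrt{\lambda}$ and $\rho |a_0| \lambda < 1$ hold by the preceding remarks), the fact that $\phi$ is a selfmap of $\D$ yields the two-sided estimate
\[
(1 + |a_0|\lambda)(|a_0| - 1) \;\le\; a_1 \;\le\; (1 - |a_0|)(1 - |a_0|\lambda).
\]
Call this estimate $(\star)$. Next I would apply Proposition~\ref{Dlem} with $\rho = 1/\sqrt{\lambda}$; its hypotheses are satisfied since $\rho = 1/\sqrt{\lambda}$, $\rho|a_0|\lambda = |a_0|\sqrt{\lambda} < 1$, and $|a_0| < 1 \le \rho$. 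Simplifying the endpoints of (\ref{CSM}) for this $\rho$ (it helps to abbreviate $u = |a_0|\sqrt{\lambda}$, so that the left endpoint becomes $u^2 - 1$ and the right endpoint becomes $(1-u)^2$), the condition for $\phi$ to be a selfmap of $\frac{1}{\sqrt{\lambda}}\D$ reads
\[
-\bigl(1 - |a_0|^2\lambda\bigr) \;\le\; a_1 \;\le\; \bigl(1 - |a_0|\sqrt{\lambda}\bigr)^2.
\]

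So the proof comes down to showing $(\star)$ implies this last pair of inequalities. For the lower bound it suffices to check $(1 + |a_0|\lambda)(1 - |a_0|) \le 1 - |a_0|^2\lambda$, which simplifies to $|a_0|(\lambda - 1) \le 0$; for the upper bound it suffices to check $(1 - |a_0|)(1 - |a_0|\lambda) \le (1 - |a_0|\sqrt{\lambda})^2$, which simplifies to $|a_0|(\sqrt{\lambda} - 1)^2 \ge 0$. Both hold because $0 < \lambda \le 1$, so the corollary follows. The only real work here is bookkeeping — verifying the hypotheses of Proposition~\ref{Dlem} for each value of $\rho$ and carrying out the two endpoint simplifications correctly — and I expect no genuine obstacle: once those simplifications are in place, the two implications from $(\star)$ are one-line identities.
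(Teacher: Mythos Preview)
Your proof is correct and follows essentially the same approach as the paper: apply Proposition~\ref{Dlem} with $\rho=1$ to extract the two-sided bound on $a_1$, then with $\rho=1/\sqrt{\lambda}$ and verify that the first pair of inequalities implies the second. Your algebraic reductions to $|a_0|(\lambda-1)\le 0$ and $|a_0|(\sqrt{\lambda}-1)^2\ge 0$ are equivalent to (and slightly cleaner than) the paper's expand-and-compare computations.
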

\begin{proof}  Applying Proposition~\ref{Dlem} with $\rho =1$, we see $|a_0| < 1$ and
\begin{equation}\label{FCE}
 (1+|a_0|\lambda)(|a_0| - 1)   \le a_1 \le (1 - |a_0|)(1-|a_0|\lambda).
\end{equation}
To complete the proof,  we apply Proposition~\ref{Dlem} with $\rho = 1/\sqrt{\lambda}$.  Clearly $|a_0| < 1/\sqrt{\lambda}$ and we complete the proof by showing $a_1$ satisfies (\ref{CSM}) with $\rho =1/\sqrt{\lambda}$.  Note the when $\rho =1/\sqrt{\lambda}$, the rightmost quantity in (\ref{CSM}) is $(1-\sqrt{\lambda}|a_0|)^2$  and the leftmost quantity is $\lambda|a_0|^2 -1$.  We have
\begin{align*}
a_1& \le  (1 - |a_0|)(1-|a_0|\lambda)\quad (\text{by}\ (\ref{FCE}))\\
   & =1 -(1+\lambda)|a_0| + |a_0|^2\lambda\\
   & \le 1 -2\sqrt{\lambda}|a_0| + |a_0|^2\lambda\\
   & = (1-\sqrt{\lambda}|a_0|)^2,
   \end{align*}
   as desired.
   Also,
   \begin{align*}
   a_1 & \ge (1+|a_0|\lambda)(|a_0| - 1)\quad (\text{by}\ (\ref{FCE}))\\
        & =  \lambda |a_0|^2 - 1 +|a_0|(1-\lambda)\\
     & \ge \lambda |a_0|^2 - 1,
     \end{align*}
     as desired.
     \end{proof}

     \begin{prop}  Let $0< \lambda\le 1$, let $\eta = \frac{1}{\lambda \beta(1)^2}$,  and let $H^2_\lambda(\beta_\eta)$ be the weighted Hardy space with generating function $k(z) = (1- \lambda z)^{-\frac{1}{\lambda\beta(1)^2}}$.  Suppose that $\psi$ and $\phi$ are given by $(\ref{GNE})$, which means
      $$
      \psi(z) = c (1- \overline{a_0}\lambda z)^{\frac{-1}{\lambda\beta(1)^2}}\quad \text{and}  \quad  \phi(z) = a_0 + \frac{a_1z}{1-\overline{a_0}\lambda z}, 
      $$
 where $c$ and $a_1$ are real constants, $a_0  = \phi(0)$,  and $\phi$ is a selfmap of $\D$.  Then  $\W$ is bounded on $H_\lambda^2(\beta_\eta)$.
 \end{prop}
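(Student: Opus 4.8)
The plan is to reduce the general case $0<\lambda\le 1$ to the case $\lambda=1$ already settled in Proposition~\ref{BE1}, by conjugating $\W$ with a dilation. Write $\eta=\frac{1}{\lambda\beta(1)^2}$. First I would observe that, expanding $k(z)=(1-\lambda z)^{-\eta}$ and comparing with $(1-w)^{-\eta}=\sum_j w^j/\beta_\eta(j)^2$, one gets $1/\beta(j)^2=\lambda^j/\beta_\eta(j)^2$, i.e.
$$
\beta(j)^2=\beta_\eta(j)^2/\lambda^{j},
$$
where $\beta_\eta$ is the weight sequence of the space $H^2(\beta_\eta)$ with generating function $(1-z)^{-\eta}$. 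Since $\beta_\eta(j)$ grows only polynomially, this coefficient relation forces every $f\in H^2_\lambda(\beta_\eta)$ to extend holomorphically to the disk $\frac{1}{\sqrt{\lambda}}\D$, and it shows that the dilation $U$ defined by $(Uf)(z)=f(z/\sqrt{\lambda})$ is a surjective isometry of $H^2_\lambda(\beta_\eta)$ onto $H^2(\beta_\eta)$, with inverse $(U^{-1}g)(z)=g(\sqrt{\lambda}\,z)$; indeed $\widehat{Uf}(j)=\lambda^{-j/2}\hat f(j)$, so $\|Uf\|^2_{H^2(\beta_\eta)}=\sum_j|\hat f(j)|^2\lambda^{-j}\beta_\eta(j)^2=\|f\|^2_{H^2_\lambda(\beta_\eta)}$.

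Next I would compute the conjugated operator $U\W U^{-1}$ on $H^2(\beta_\eta)$. Tracking $g\in H^2(\beta_\eta)$ through $U^{-1}$, then $\W$, then $U$, one finds $U\W U^{-1}=W_{\Psi,\Phi}$, where $\Psi(z)=\psi(z/\sqrt{\lambda})$ and $\Phi(z)=\sqrt{\lambda}\,\phi(z/\sqrt{\lambda})$. Substituting the formulas (\ref{GNE}) and setting $b_0=\sqrt{\lambda}\,a_0$ (so $\overline{b_0}=\sqrt{\lambda}\,\overline{a_0}$ and $|b_0|=\sqrt{\lambda}\,|a_0|<1$), these symbols simplify to
$$
\Psi(z)=c\,(1-\overline{b_0}\,z)^{-\eta},\qquad \Phi(z)=b_0+\frac{a_1 z}{1-\overline{b_0}\,z},
$$
which are exactly the symbols of (\ref{GNE}) in the case $\lambda=1$, with $a_0$ replaced by $b_0$ and the same real constants $a_1,c$. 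Two small points should be recorded here: $\psi$ and $\phi$ are holomorphic on a neighborhood of $\frac{1}{\sqrt{\lambda}}\D^{-}$, since their only singularity $1/(\overline{a_0}\lambda)$ has modulus $1/(|a_0|\lambda)>1/\sqrt{\lambda}$, so the compositions make sense; and, crucially, $\Phi$ is a selfmap of $\D$, because $\phi$ is a selfmap of $\frac{1}{\sqrt{\lambda}}\D$ by Corollary~\ref{SMC}, whence $\Phi(z)=\sqrt{\lambda}\,\phi(z/\sqrt{\lambda})\in\D$ for every $z\in\D$.

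Finally, since $\eta=\frac{1}{\lambda\beta(1)^2}$ equals $1/\beta_\eta(1)^2$ for the space $H^2(\beta_\eta)$ with generating function $(1-z)^{-\eta}$, Proposition~\ref{BE1} applies to $W_{\Psi,\Phi}$ and shows that it is bounded on $H^2(\beta_\eta)$. Therefore $\W=U^{-1}W_{\Psi,\Phi}U$ is bounded on $H^2_\lambda(\beta_\eta)$, with $\|\W\|=\|W_{\Psi,\Phi}\|$, completing Case~2 and hence the proof that part (iii) implies part (i) of Theorem~\ref{MT}. The main obstacle, such as it is, lies in the second step: one must carry out the change of variables carefully enough to see that the conjugated operator is again a weighted composition operator whose symbols fall into exactly the family covered by Proposition~\ref{BE1}, and in particular that the transported selfmap $\Phi$ still sends $\D$ into $\D$ — which is precisely why Corollary~\ref{SMC} was established. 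Everything else is routine bookkeeping with Maclaurin coefficients and with the explicit Möbius-type formulas for $\psi$ and $\phi$.
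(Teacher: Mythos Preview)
Your proof is correct and follows essentially the same route as the paper: both conjugate $\W$ by the dilation isometry between $H^2_\lambda(\beta_\eta)$ and $H^2(\beta_\eta)$, obtain the transformed symbols $\Psi(z)=\psi(z/\sqrt{\lambda})$ and $\Phi(z)=\sqrt{\lambda}\,\phi(z/\sqrt{\lambda})$, invoke Corollary~\ref{SMC} to see $\Phi$ is a selfmap of $\D$, and then apply Proposition~\ref{BE1}. Your version is slightly more explicit (the coefficient identity $\beta(j)^2=\beta_\eta(j)^2/\lambda^j$, the verification that $U$ is an isometry, and the check that $\eta=1/\beta_\eta(1)^2$), but the argument is the same.
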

 \begin{proof} Because $\phi$  is a selfmap of $\D$, Corollary~\ref{SMC} shows that  $z\mapsto \phi(z/\sqrt{\lambda})$ maps $\D$ into $\frac{1}{\sqrt{\lambda}}\D$.   Hence $z\mapsto \sqrt{\lambda} \phi(z/\sqrt{\lambda})$ is a selfmap of $\D$, which we denote $\tilde{\phi}$:
     $$
     \tilde{\phi}(z) = \sqrt{\lambda} \phi(z/\sqrt{\lambda}) = \sqrt{\lambda}a_0 + \frac{a_1z}{1-\overline{a_0}\sqrt{\lambda}z}.
     $$

        As above, let $H^2(\beta_\eta)$ be the weighted Hardy space with generating function $k_\eta(z) = (1-z)^{-\eta}$.  Observe that any function $f$ in $H^2_\lambda(\beta_n)$ has analytic extension to the disk $\frac{1}{\sqrt{\lambda}}\D$.   Let $u(z) = \sqrt{\lambda} z$.  Observe that the composition operator $C_u: H^2(\beta_\eta) \rightarrow H^2_\lambda(\beta_\eta)$  is a surjective isometry---that is, it is a unitary operator with adjoint $C_\nu$, where $\nu(z) = z/\sqrt{\lambda}$.    Define $\tilde{\psi}(z) = ck_\eta(\sqrt{\lambda}\, \overline{a_0}z)$.  The weighted composition operator $W_{\tilde{\psi},\tilde{\phi}}$ is bounded on $H^2(\beta_\eta)$ by Proposition~\ref{BE1}.    Observe that $C_u W_{\tilde{\psi},\tilde{\phi}} C_\nu$ is the weighted composition operator $\W$ on $H^2_\lambda(\beta_\eta)$ and thus $\W$ is bounded on $H^2_\lambda(\beta_\eta)$; in fact, it's unitarily equivalent to the bounded operator $W_{\tilde{\psi}, \tilde{\phi}}$ on $H^2(\beta_\eta)$.
     \end{proof}
     
    The preceding proposition completes our proof of Theorem~\ref{MT}, showing when part (iii) holds and  $k$ is given by (\ref{ocnotone}), then $\W$ is bounded on $H^2(\beta)$.  (Proposition~\ref{CoCa} establishes the analogous result when $k$ is given by (\ref{ocone}).)
    
    \begin{Ack}  The authors wish to thank Joel H.\ Shapiro for helpful comments, references, and suggestions,  which improved this paper.
    \end{Ack}

\end{document}